
\documentclass[11pt,a4paper]{article}
\usepackage{amsmath}
\usepackage{amsfonts}
\usepackage{amssymb}

\setcounter{MaxMatrixCols}{10}

\providecommand{\U}[1]{\protect \rule{.1in}{.1in}}
\newtheorem{theorem}{Theorem}

\newtheorem{corollary}[theorem]{Corollary}

\newtheorem{example}{Example}

\newtheorem{proposition}[theorem]{Proposition}

\newenvironment{proof}[1][Proof]{\noindent \textbf{#1.} }{\  \rule{0.5em}{0.5em}}
\pagestyle{myheadings} \markboth{ } {}
\parskip 2mm

\include {mak}
\parindent 0.5cc
\oddsidemargin 0cm
\evensidemargin 0cm
\topmargin -0.5 cm
\textwidth 16cm
\textheight 24.5cm
\baselineskip=35pt

\input{tcilatex}
\begin{document}

\begin{center}
{\Large An application of the partial r-Bell polynomials }

{\Large on some family of bivariate polynomials }

\  \  \  \ 

{\large Miloud Mihoubi\footnotemark[1] \ and \ Yamina Saidi\footnotemark[2]}

USTHB, Faculty of Mathematics, RECITS Laboratory, PB 32 El Alia 16111
Algiers, Algeria.

{\large \footnotemark[1]}mmihoubi@usthb.dz \ {\large \footnotemark[1]}%
miloudmihoubi@gmail.com \ {\large \footnotemark[2]}y\_saidi34@yahoo.com

{\large \  \  \ }
\end{center}

\noindent \textbf{Abstract. }The aim of this paper is to give some
combinatorial relations linked polynomials generalizing those of Appell type
to the partial $r$-Bell polynomials. We give an inverse relation, recurrence
relations involving some family of polynomials and their exact expressions
at rational values in terms of the partial $r$-Bell polynomials. We
illustrate the obtained results by various comprehensive examples.

\noindent \textbf{Keywords.} The partial $r$-Bell polynomials; polynomials
of Appell type; inverse relations; recurrence relations.

\noindent Mathematics Subject Classification 2010: 05A18; 12E10; 11B68.

\section{Introduction}

This work is motivated by the work of Mihoubi and Tiachachat \cite{mih1} on
the expressions of the Bernoulli polynomials at rational numbers by the
whitney numbers and the work of Mihoubi and Saidi \cite{mih3} on the
polynomials of Appell type. \newline
The aim of this paper is to give some combinatorial relations linked
polynomials generalizing those of Appell type to the partial $r$-Bell
polynomials. For given two sequences of real numbers $\mathbf{a=}\left(
a_{1},a_{2},\ldots \right) $ and $\mathbf{b=}\left( b_{1},b_{2},\ldots
\right) ,$ recall that the partial $r$-Bell polynomials%
\begin{equation*}
B_{n,k}^{\left( r\right) }\left( \mathbf{a};\mathbf{b}\right)
:=B_{n,k}^{\left( r\right) }\left( \left( a_{j}\right) ;\left( b_{j}\right)
\right) =B_{n,k}^{\left( r\right) }\left( a_{1},a_{2},\ldots
;b_{1},b_{2},\ldots \right)
\end{equation*}%
are defined by their generating function to be%
\begin{equation*}
\underset{n\geq k}{\sum }B_{n+r,k+r}^{\left( r\right) }\left( \mathbf{a};%
\mathbf{b}\right) \frac{t^{n}}{n!}=\frac{1}{k!}\left( \underset{j\geq 1}{%
\sum }a_{j}\frac{t^{j}}{j!}\right) ^{k}\left( \underset{j\geq 0}{\sum }%
b_{j+1}\frac{t^{j}}{j!}\right) ^{r}.
\end{equation*}%
These polynomials present a naturel extension of the partial Bell
polynomials \cite{bell} and generalize the $r$-Whitney numbers of both
kinds, the $r$-Lah numbers and the $r$-Whitney-Lah numbers. Mihoubi et
Rahmani \cite{mih4} introduced and studied these polynomials for which they
gave combinatorial and probabilistic interpretations and several properties.
Shattuck \cite{sha} gave more properties and Chouria and Luque \cite{chou}
defined three versions of the partial $r$-Bell polynomials in three
combinatorial Hopf algebras. For an application of these polynomials on a
family of bivariate polynomials, let us define this family. Indeed, let $A,\
B$ and $H$ be three analytic functions around zero with $A\left( 0\right)
=0, $ $A^{\prime }\left( 0\right) =B\left( 0\right) =1$ and let $\alpha $
and $x$ be real numbers. A sequence of numbers $P_{n}^{\left( \alpha \right)
}\left( A,H\right) $ is defined by%
\begin{equation}
\underset{n\geq 0}{\sum }P_{n}^{\left( \alpha \right) }\left( A,H\right) 
\frac{t^{n}}{n!}=\left( \frac{t}{A\left( t\right) }\right) ^{\alpha }H\left(
t\right)  \label{g}
\end{equation}%
and a sequence of polynomials $P_{n}^{\left( \alpha \right) }\left( x,y\mid
A,B,H\right) $ is to be%
\begin{equation}
\underset{n\geq 0}{\sum }P_{n}^{\left( \alpha \right) }\left( x,y\mid
A,B,H\right) \frac{t^{n}}{n!}=\left( \frac{t}{A\left( t\right) }\right)
^{\alpha }\left( A^{\prime }\left( t\right) \right) ^{x}\left( B\left(
t\right) \right) ^{y}H\left( t\right) .  \label{a}
\end{equation}%
In the second section we give an inverse relation linked to the partial $r$%
-Bell polynomials and in the third section we give recurrence relations and
exact expressions at rational values for the bivariate polynomials defined
above.

\section{The partial $r$-Bell polynomials and inverse relations}

For any power series $A\left( t\right) =\underset{j\geq 1}{\sum }a_{j}\frac{%
t^{j}}{j!}$ with $a_{1}\neq 0,$ below, we denote by $\overline{A}\left(
t\right) =\underset{j\geq 1}{\sum }\overline{a}_{j}\frac{t^{j}}{j!}$ for the
compositional inverse of $A\left( t\right) \ $and we let $\mathbf{a:=}\left(
a_{1},a_{2},\ldots \right) $ and $\overline{\mathbf{a}}:\mathbf{=}\left( 
\overline{a}_{1},\overline{a}_{2},\ldots \right) .$

The following theorem gives inverse relations linked to the partial $r$-Bell
polynomials.

\begin{theorem}
\label{T1}The following inverse relations hold%
\begin{equation*}
U_{n}=\underset{k=0}{\overset{n}{\sum }}B_{n+r,k+r}^{\left( r\right) }\left( 
\mathbf{a;b}\right) V_{k},\  \  \  \ V_{n}=\underset{k=0}{\overset{n}{\sum }}%
B_{n+r,k+r}^{\left( r\right) }\left( \overline{\mathbf{a}}\mathbf{;}\left( 
\mathbf{b\circ }\overline{\mathbf{a}}\right) ^{-1}\right) U_{k},
\end{equation*}%
i.e.%
\begin{equation*}
\underset{k=j}{\overset{n}{\sum }}B_{n+r,k+r}^{\left( r\right) }\left( 
\mathbf{a;b}\right) B_{k+r,j+r}^{\left( r\right) }\left( \overline{\mathbf{a}%
}\mathbf{;}\left( \mathbf{b\circ }\overline{\mathbf{a}}\right) ^{-1}\right)
=\delta _{\left( j,n\right) },
\end{equation*}%
where $\mathbf{b\circ }\overline{\mathbf{a}}$ is the sequence of the
coefficients of the power series $B\left( \overline{A}\left( t\right)
\right) $ and $\left( \mathbf{b\circ }\overline{\mathbf{a}}\right) ^{-1}$ is
the sequence of the coefficients of the power series $\left( B\left( 
\overline{A}\left( t\right) \right) \right) ^{-1}.$
\end{theorem}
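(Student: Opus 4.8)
The plan is to reduce the pair of transform identities to the single orthogonality relation and then to establish that relation by composing generating functions. Writing $M_{n,k}=B_{n+r,k+r}^{(r)}\left( \mathbf{a};\mathbf{b}\right) $ and $N_{k,j}=B_{k+r,j+r}^{(r)}\left( \overline{\mathbf{a}};\left( \mathbf{b}\circ \overline{\mathbf{a}}\right) ^{-1}\right) $, the defining generating function shows that $M_{n,k}=0$ for $k>n$ and $M_{n,n}=a_{1}^{n}b_{1}^{r}$, and likewise that $N$ is lower triangular with $N_{n,n}=a_{1}^{-n}b_{1}^{-r}$; both diagonals are nonzero since $a_{1}\neq 0$ and $b_{1}\neq 0$. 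Hence $M$ and $N$ are invertible lower-triangular arrays, so it suffices to prove $\sum_{k=j}^{n}M_{n,k}N_{k,j}=\delta _{\left( j,n\right) }$. Once $MN=I$ is known, triangularity forces $NM=I$ as well, and the two displayed formulas $U=MV$ and $V=NU$ become equivalent restatements of the orthogonality.

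To prove the orthogonality I would work with exponential generating functions. Put $A\left( t\right) =\sum_{j\geq 1}a_{j}t^{j}/j!$ and $B\left( t\right) =\sum_{j\geq 0}b_{j+1}t^{j}/j!$, so that the definition in the introduction reads $\sum_{n\geq k}M_{n,k}t^{n}/n!=A\left( t\right) ^{k}B\left( t\right) ^{r}/k!$. Applying the same definition to the data $\overline{\mathbf{a}}$ and $\left( \mathbf{b}\circ \overline{\mathbf{a}}\right) ^{-1}$, and using that $\left( \mathbf{b}\circ \overline{\mathbf{a}}\right) ^{-1}$ is by definition the coefficient sequence of $1/B\left( \overline{A}\left( t\right) \right) $, gives $\sum_{k\geq j}N_{k,j}s^{k}/k!=\overline{A}\left( s\right) ^{j}B\left( \overline{A}\left( s\right) \right) ^{-r}/j!$.

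The key step is then a substitution. I would form $\sum_{n}\bigl( \sum_{k}M_{n,k}N_{k,j}\bigr) t^{n}/n!$; interchanging the two sums, which is legitimate because triangularity makes each power of $t$ receive only finitely many contributions, and inserting the first generating function yields $B\left( t\right) ^{r}\sum_{k\geq j}N_{k,j}A\left( t\right) ^{k}/k!$. Now substitute $s=A\left( t\right) $ into the second generating function. Since $\overline{A}$ is the compositional inverse of $A$, one has $\overline{A}\left( A\left( t\right) \right) =t$, whence $\sum_{k}N_{k,j}A\left( t\right) ^{k}/k!=t^{j}B\left( t\right) ^{-r}/j!$, and the prefactor $B\left( t\right) ^{r}$ cancels $B\left( t\right) ^{-r}$ exactly, leaving $t^{j}/j!$. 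Comparing coefficients of $t^{n}/n!$ then gives $\sum_{k}M_{n,k}N_{k,j}=\delta _{\left( j,n\right) }$, which is the required orthogonality.

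The only genuine subtlety is the validity and the purpose of the substitution $s=A\left( t\right) $. Here $a_{1}\neq 0$ guarantees that $\overline{A}$ exists, $A\left( 0\right) =0$ makes the substitution into a formal power series legitimate, and the invertibility of $B$ (nonzero constant term) makes $1/B\left( \overline{A}\left( t\right) \right) $ a bona fide power series. The conceptual heart of the argument — and the reason the second family is built from the specific data $\left( \mathbf{b}\circ \overline{\mathbf{a}}\right) ^{-1}$ — is precisely that this choice is what forces the two $B$-factors to annihilate one another after the substitution; any other choice would leave a residual factor and spoil the orthogonality. I expect this cancellation, rather than any computation, to be the crux of the proof.
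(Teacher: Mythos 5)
Your proposal is correct and rests on exactly the same mechanism as the paper's proof: expand the sums through the defining generating function $\frac{1}{k!}\left( A\left( t\right) \right) ^{k}\left( B\left( t\right) \right) ^{r}$, substitute the compositional inverse, and let $\left( B\left( \overline{A}\left( t\right) \right) \right) ^{-r}$ cancel against $\left( B\left( t\right) \right) ^{r}$. The only difference is packaging: the paper states the equivalence of the functional equations $U\left( t\right) =V\left( A\left( t\right) \right) \left( B\left( t\right) \right) ^{r}$ and $V\left( t\right) =U\left( \overline{A}\left( t\right) \right) \left( B\left( \overline{A}\left( t\right) \right) \right) ^{-r}$ via the replacement $t\mapsto \overline{A}\left( t\right) $, whereas you first prove the orthogonality relation and then deduce the pair of transforms from triangularity --- a cosmetic reorganization which, if anything, makes the ``i.e.'' form of the statement explicit where the paper leaves it implicit.
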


\begin{proof}
If $U\left( t\right) =\underset{n\geq 0}{\sum }U_{n}\frac{t^{n}}{n!}$ and $%
V\left( t\right) =\underset{n\geq 0}{\sum }V_{n}\frac{t^{n}}{n!},$ then $%
U\left( t\right) =V\left( A\left( t\right) \right) \left( B\left( t\right)
\right) ^{r},$ or equivalently, by replacing $t$ by $\overline{A}\left(
t\right) ,$ it becomes $V\left( t\right) =U\left( \overline{A}\left(
t\right) \right) \left( B\left( \overline{A}\left( t\right) \right) \right)
^{-r}.$
\end{proof}

\begin{example}
For $A\left( t\right) =\frac{\exp \left( mt\right) -1}{m}$ and $B\left(
t\right) =\exp \left( t\right) $ we get 
\begin{eqnarray*}
\overline{A}\left( t\right) &=&\frac{\ln \left( 1+mt\right) }{m}, \\
\left( B\left( \overline{A}\left( t\right) \right) \right) ^{-1} &=&\left(
1+mt\right) ^{-\frac{1}{m}}, \\
B_{n+r,k+r}^{\left( r\right) }\left( \mathbf{a};\mathbf{b}\right)
&=&W_{m,r}\left( n,k\right) , \\
B_{n+r,k+r}^{\left( r\right) }\left( \overline{\mathbf{a}};\left( \mathbf{%
b\circ }\overline{\mathbf{a}}\right) ^{-1}\right) &=&w_{m,r}\left(
n,k\right) .
\end{eqnarray*}%
So, we obtain the inverse relations%
\begin{equation*}
U_{n}=\underset{k=0}{\overset{n}{\sum }}W_{m,r}\left( n,k\right) V_{k},\  \  \
\ V_{n}=\underset{k=0}{\overset{n}{\sum }}w_{m,r}\left( n,k\right) U_{k},
\end{equation*}%
where $w_{m,r}\left( n,k\right) $ and $W_{m,r}\left( n,k\right) $ are,
respectively, the whitney numbers of the first and second kind, for more
information, see for example \cite{bel,man,mer,mez,mih4,rah}.
\end{example}

\noindent For $B\left( t\right) =A^{\prime }\left( t\right) $ in Theorem \ref%
{T1} we obtain:

\begin{corollary}
\label{C1}The following inverse relations hold%
\begin{equation*}
U_{n}=\underset{k=0}{\overset{n}{\sum }}B_{n+r,k+r}^{\left( r\right) }\left( 
\mathbf{a}\right) V_{k},\  \  \  \ V_{n}=\underset{k=0}{\overset{n}{\sum }}%
B_{n+r,k+r}^{\left( r\right) }\left( \overline{\mathbf{a}}\right) U_{k},
\end{equation*}%
where $B_{n+r,k+r}^{\left( r\right) }\left( \mathbf{a}\right)
:=B_{n+r,k+r}^{\left( r\right) }\left( \mathbf{a;a}\right) .$
\end{corollary}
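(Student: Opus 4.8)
The plan is to obtain the corollary by specializing Theorem~\ref{T1} to $B(t)=A^{\prime }(t)$ and checking that, under this choice, both $b$-data occurring in Theorem~\ref{T1} collapse onto the $a$-data, so that $B_{n+r,k+r}^{\left( r\right) }\left( \mathbf{a};\mathbf{b}\right) =B_{n+r,k+r}^{\left( r\right) }\left( \mathbf{a}\right) $ and $B_{n+r,k+r}^{\left( r\right) }\left( \overline{\mathbf{a}};\left( \mathbf{b\circ }\overline{\mathbf{a}}\right) ^{-1}\right) =B_{n+r,k+r}^{\left( r\right) }\left( \overline{\mathbf{a}}\right) $. The whole argument reduces to identifying two explicit power series with the generating functions that encode the relevant $b$-sequences in the defining identity of the partial $r$-Bell polynomials.

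First I would treat the left relation. By the defining generating function, the sequence $\mathbf{b}$ attached to a factor $B(t)$ is read off from $B\left( t\right) =\sum_{j\geq 0}b_{j+1}\frac{t^{j}}{j!}$, that is $b_{j+1}=B^{\left( j\right) }\left( 0\right) $. Taking $B(t)=A^{\prime }(t)$ and differentiating $A\left( t\right) =\sum_{j\geq 1}a_{j}\frac{t^{j}}{j!}$ term by term gives $A^{\prime }\left( t\right) =\sum_{j\geq 0}a_{j+1}\frac{t^{j}}{j!}$, so comparing coefficients yields $b_{j+1}=a_{j+1}$ for every $j\geq 0$, i.e. $\mathbf{b}=\mathbf{a}$. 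Hence the first sum in Theorem~\ref{T1} is exactly $U_{n}=\sum_{k=0}^{n}B_{n+r,k+r}^{\left( r\right) }\left( \mathbf{a}\right) V_{k}$.

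The substance lies in the right relation, where I must show $\left( \mathbf{b\circ }\overline{\mathbf{a}}\right) ^{-1}=\overline{\mathbf{a}}$. By definition this sequence encodes the power series $\left( B\left( \overline{A}\left( t\right) \right) \right) ^{-1}=\left( A^{\prime }\left( \overline{A}\left( t\right) \right) \right) ^{-1}$. To evaluate it I would differentiate the compositional-inverse identity $A\left( \overline{A}\left( t\right) \right) =t$, obtaining $A^{\prime }\left( \overline{A}\left( t\right) \right) \overline{A}^{\prime }\left( t\right) =1$ by the chain rule, whence $\left( A^{\prime }\left( \overline{A}\left( t\right) \right) \right) ^{-1}=\overline{A}^{\prime }\left( t\right) $. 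Differentiating $\overline{A}\left( t\right) =\sum_{j\geq 1}\overline{a}_{j}\frac{t^{j}}{j!}$ term by term gives $\overline{A}^{\prime }\left( t\right) =\sum_{j\geq 0}\overline{a}_{j+1}\frac{t^{j}}{j!}$, which is precisely the generating function encoding the $b$-sequence $\overline{\mathbf{a}}$. Therefore $\left( \mathbf{b\circ }\overline{\mathbf{a}}\right) ^{-1}=\overline{\mathbf{a}}$, and the second sum becomes $V_{n}=\sum_{k=0}^{n}B_{n+r,k+r}^{\left( r\right) }\left( \overline{\mathbf{a}}\right) U_{k}$.

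I expect the only delicate point to be the bookkeeping with the index shift in the $b$-convention (that $b_{j+1}$, not $b_{j}$, is the coefficient of $t^{j}/j!$), together with the recognition that the object to differentiate is the compositional-inverse identity rather than $B$ itself; once $\left( A^{\prime }\left( \overline{A}\left( t\right) \right) \right) ^{-1}=\overline{A}^{\prime }\left( t\right) $ is in hand, the identification of the two sequences is immediate and the result follows by direct substitution into Theorem~\ref{T1}.
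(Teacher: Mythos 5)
Your proposal is correct and follows exactly the route the paper intends: the paper states the corollary as an immediate specialization of Theorem~\ref{T1} to $B(t)=A'(t)$, and your verification that $\mathbf{b}=\mathbf{a}$ and that $\left(A'\left(\overline{A}(t)\right)\right)^{-1}=\overline{A}'(t)$ (hence $\left(\mathbf{b}\circ\overline{\mathbf{a}}\right)^{-1}=\overline{\mathbf{a}}$) supplies precisely the details the paper leaves implicit.
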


\noindent For $A\left( t\right) =\left( 1-t\right) ^{\beta }-1$ and $B\left(
t\right) =\left( 1-t\right) ^{\alpha }$ in Theorem \ref{T1} we obtain:

\begin{corollary}
Let $\alpha ,\beta $ be two real numbers such that $\beta \neq 0.$ \newline
Then, the following inverse relations hold%
\begin{eqnarray*}
U_{n} &=&\underset{k=0}{\overset{n}{\sum }}B_{n+r,k+r}^{\left( r\right)
}\left( \left( \beta \right) _{j};\left( \alpha \right) _{j-1}\right) V_{k},
\\
V_{n} &=&\underset{k=0}{\overset{n}{\sum }}\left( -1\right)
^{n-k}B_{n+r,k+r}^{\left( r\right) }\left( \left( -1/\beta \right) _{j}%
\mathbf{;}\left \langle \alpha /\beta \right \rangle _{j-1}\right) U_{k},
\end{eqnarray*}%
where $\left( \alpha \right) _{n}:=\alpha \left( \alpha -1\right) \cdots
\left( \alpha -n+1\right) \ $if$\ n\geq 1,\  \left( \alpha \right) _{0}:=1$
and $\left \langle \alpha \right \rangle _{n}:=\left( -1\right) ^{n}\left(
-\alpha \right) _{n}.$
\end{corollary}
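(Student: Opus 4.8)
The plan is to obtain both relations directly from Theorem~\ref{T1} by substituting the two explicit functions $A(t)=(1-t)^{\beta}-1$ and $B(t)=(1-t)^{\alpha}$, so that the whole proof reduces to identifying the four coefficient sequences $\mathbf{a}$, $\mathbf{b}$, $\overline{\mathbf{a}}$ and $(\mathbf{b}\circ\overline{\mathbf{a}})^{-1}$ that feed the two partial $r$-Bell polynomials in the theorem. Each of these is the coefficient sequence of an elementary binomial series, so no idea beyond Theorem~\ref{T1} is needed; the content is bookkeeping with factorials and signs.

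First I would expand $A$ and $B$ by the generalized binomial theorem, writing $\binom{\gamma}{j}=(\gamma)_{j}/j!$, to read off $a_{j}$ and $b_{j}$ and hence the first relation. For the second relation I would produce its data in three steps: (i) invert $A$ by solving $A(\overline{A}(t))=t$ in closed form, which yields a pure power of $1+t$ and therefore gives $\overline{a}_{j}$ as a factorial of $1/\beta$; (ii) substitute to obtain $B(\overline{A}(t))$, which collapses to the single power $(1+t)^{\alpha/\beta}$; and (iii) invert this to $(1+t)^{-\alpha/\beta}$ and expand once more, so that $(\mathbf{b}\circ\overline{\mathbf{a}})^{-1}$ comes out as a factorial of $-\alpha/\beta$. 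Inserting these sequences into the two formulas of Theorem~\ref{T1} then gives the two inverse relations.

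The step I expect to be the real obstacle is not the inversion but the sign- and falling-versus-rising-factorial bookkeeping needed to bring the raw coefficients into the stated normal forms $(\beta)_{j}$, $(\alpha)_{j-1}$, $(-1/\beta)_{j}$ and $\langle\alpha/\beta\rangle_{j-1}$ together with the prefactor $(-1)^{n-k}$. For this I would use two homogeneity properties that are immediate from the defining generating function of $B_{n+r,k+r}^{(r)}$: replacing $t$ by $ct$ shows that the substitution $a_{j}\mapsto c^{\,j}a_{j}$, $b_{j}\mapsto c^{\,j-1}b_{j}$ multiplies $B_{n+r,k+r}^{(r)}(\mathbf{a};\mathbf{b})$ by $c^{\,n}$, while scaling every $a_{j}$ by a constant $u$ multiplies it by $u^{\,k}$. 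Applying these with $c=-1$ and $u=-1$ is exactly what extracts the alternating signs hidden in the binomial coefficients, turns the falling factorials into the rising factorials $\langle\,\cdot\,\rangle$, and assembles the factor $(-1)^{n}(-1)^{k}=(-1)^{n-k}$ in the second relation. Carrying out this normalization so that the two homogeneities act consistently on the $\mathbf{a}$- and $\mathbf{b}$-slots simultaneously is where essentially all the care lies.
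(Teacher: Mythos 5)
Your strategy --- specialize Theorem \ref{T1} to the given $A,B$ and then normalize signs with the two homogeneity rules --- is exactly the paper's own (implicit) proof, and both homogeneity rules you state are correct. The gap is your final claim that this bookkeeping ``assembles'' precisely the printed normal forms: it does not. Carrying out your own steps: from $A(t)=(1-t)^{\beta}-1$ one gets $\overline{A}(t)=1-(1+t)^{1/\beta}$, hence $\overline{a}_{j}=-(1/\beta)_{j}$, and $\left( B(\overline{A}(t))\right) ^{-1}=(1+t)^{-\alpha /\beta }$, hence the second slot $(-\alpha /\beta )_{j-1}$. Writing $-(1/\beta)_{j}=(-1)^{j+1}\left\langle -1/\beta \right\rangle _{j}$ and $(-\alpha /\beta )_{j-1}=(-1)^{j-1}\left\langle \alpha /\beta \right\rangle _{j-1}$ and applying your rules with $u=c=-1$ yields
\begin{equation*}
B_{n+r,k+r}^{\left( r\right) }\bigl( (-(1/\beta )_{j});((-\alpha /\beta
)_{j-1})\bigr) =(-1)^{n-k}B_{n+r,k+r}^{\left( r\right) }\bigl( (\left\langle
-1/\beta \right\rangle _{j});(\left\langle \alpha /\beta \right\rangle
_{j-1})\bigr) ,
\end{equation*}
so the first slot is the \emph{rising} factorial $\left\langle -1/\beta \right\rangle _{j}=(-1)^{j}(1/\beta )_{j}$, not the falling factorial $(-1/\beta )_{j}=(-1)^{j}\left\langle 1/\beta \right\rangle _{j}$ that the corollary prints; similarly your first matrix comes out as $(-1)^{n}B_{n+r,k+r}^{\left( r\right) }(((\beta )_{j});((\alpha )_{j-1}))$, carrying a sign absent from the printed first relation (the printed first relation, with no sign, corresponds instead to $A(t)=(1+t)^{\beta }-1$, $B(t)=(1+t)^{\alpha }$). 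Since a lower-triangular invertible matrix has a unique inverse, no redistribution of signs between the two relations can reconcile these discrepancies.

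In fact the statement as printed is false, so no correct execution of your plan can terminate at it. Take $\beta =1$ and $r=0$ (then the $b$-slot is irrelevant): the first printed matrix is $B_{n,k}(((1)_{j}))=\delta _{n,k}$, the identity, while the second printed matrix $(-1)^{n-k}B_{n,k}(((-1)_{j}))$, with $(-1)_{j}=(-1)^{j}j!$, has $(n,k)=(2,1)$ entry $(-1)^{1}B_{2,1}(-1,2,\ldots )=-2\neq 0$; hence the two printed matrices are not mutually inverse. What your computation actually proves is the corrected pair
\begin{equation*}
U_{n}=\underset{k=0}{\overset{n}{\sum }}B_{n+r,k+r}^{\left( r\right)
}\left( ((\beta )_{j});((\alpha )_{j-1})\right) V_{k}\  \Longleftrightarrow \
V_{n}=\underset{k=0}{\overset{n}{\sum }}B_{n+r,k+r}^{\left( r\right)
}\left( ((1/\beta )_{j});((-\alpha /\beta )_{j-1})\right) U_{k},
\end{equation*}
equivalently, in signed form, $(-1)^{n}$ on the first matrix together with $(-1)^{n-k}$ and the slots $\left\langle -1/\beta \right\rangle _{j}$, $\left\langle \alpha /\beta \right\rangle _{j-1}$ on the second. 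So your method is the right one and coincides with the paper's, but asserting that the normalization lands on the corollary as stated is a genuine error: had you pushed the bookkeeping through, you would have found that the corollary itself needs correcting (falling versus rising factorial in the $-1/\beta$ slot, and the sign structure), not merely tidying.
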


\noindent For $B\left( t\right) =A\left( t\right) +1$ in Theorem \ref{T1} we
obtain:

\begin{corollary}
Then, the following inverse relations hold%
\begin{eqnarray*}
U_{n} &=&\underset{k=0}{\overset{n}{\sum }}B_{n+r,k+r}^{\left( r\right)
}\left( \left( a_{j}\right) ;\left( a_{j-1}\right) \right) V_{k},\  \  \
a_{0}=1, \\
V_{n} &=&\underset{k=0}{\overset{n}{\sum }}B_{n+r,k+r}^{\left( r\right)
}\left( \left( \overline{a}_{j}\right) ;\left( \left( -1\right) ^{j-1}\left(
j-1\right) !\right) \right) U_{k}.
\end{eqnarray*}
\end{corollary}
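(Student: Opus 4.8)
The plan is to specialize Theorem \ref{T1} to the case $B\left( t\right) =A\left( t\right) +1$ and then to read off, on each side, the two sequences that feed the partial $r$-Bell polynomials. First I would rewrite $B$ in the exponential form dictated by the defining generating function. Since $A\left( t\right) =\sum_{j\geq 1}a_{j}\frac{t^{j}}{j!}$, setting $a_{0}=1$ gives
\[
B\left( t\right) =A\left( t\right) +1=\sum_{j\geq 0}a_{j}\frac{t^{j}}{j!}.
\]
Comparing with $B\left( t\right) =\sum_{j\geq 0}b_{j+1}\frac{t^{j}}{j!}$ forces $b_{j+1}=a_{j}$, so the $\mathbf{b}$-sequence here is $\left( a_{j-1}\right) _{j\geq 1}$. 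Substituting $\mathbf{a}=\left( a_{j}\right) $ and $\mathbf{b}=\left( a_{j-1}\right) $ into the left-hand formula of Theorem \ref{T1} gives the first claimed relation at once.

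For the second relation, the essential computation is the composite $B\left( \overline{A}\left( t\right) \right) $. Because $\overline{A}$ is the compositional inverse of $A$, we have $A\left( \overline{A}\left( t\right) \right) =t$, hence
\[
B\left( \overline{A}\left( t\right) \right) =A\left( \overline{A}\left( t\right) \right) +1=1+t.
\]
Its reciprocal is the geometric series $\left( 1+t\right) ^{-1}=\sum_{j\geq 0}\left( -1\right) ^{j}t^{j}=\sum_{j\geq 0}\left( -1\right) ^{j}j!\frac{t^{j}}{j!}$. Reading the coefficients in the same exponential convention $\left( B\left( \overline{A}\left( t\right) \right) \right) ^{-1}=\sum_{j\geq 0}c_{j+1}\frac{t^{j}}{j!}$ yields $c_{j+1}=\left( -1\right) ^{j}j!$, i.e. the sequence $\left( \mathbf{b}\circ \overline{\mathbf{a}}\right) ^{-1}=\left( \left( -1\right) ^{j-1}\left( j-1\right) !\right) _{j\geq 1}$. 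Plugging $\overline{\mathbf{a}}=\left( \overline{a}_{j}\right) $ together with this sequence into the right-hand formula of Theorem \ref{T1} produces the second relation.

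The argument is therefore a direct substitution, and the only place where care is needed is the bookkeeping between the ordinary-power-series expansion of $\left( 1+t\right) ^{-1}$ and the exponential-generating-function normalization used for the $\mathbf{b}$-data; getting the factor $j!$ (equivalently, the $\left( j-1\right) !$ in the final sequence) right is the one step where a slip would be easy. Everything else rests solely on the identity $A\left( \overline{A}\left( t\right) \right) =t$ and on the collapse $B\left( \overline{A}\left( t\right) \right) =1+t$ that the choice $B=A+1$ makes possible.
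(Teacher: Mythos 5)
Your proposal is correct and follows exactly the route the paper intends: the corollary is stated as an immediate specialization of Theorem \ref{T1} to $B(t)=A(t)+1$, and your bookkeeping — $b_{j}=a_{j-1}$ with $a_{0}=1$ from the exponential normalization, the collapse $B(\overline{A}(t))=1+t$, and the coefficient extraction $(1+t)^{-1}=\sum_{j\geq 0}(-1)^{j}j!\,\frac{t^{j}}{j!}$ giving the sequence $\left((-1)^{j-1}(j-1)!\right)_{j\geq 1}$ — is precisely the substitution the paper leaves implicit. No gaps.
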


\section{Recurrence relations involving the polynomials $P_{n}^{\left( 
\protect \alpha \right) }$}

In \cite{mih3} we are proved the following identity%
\begin{equation}
P_{n}^{\left( \alpha \right) }\left( A,H\right) =P_{n}^{\left( n+1-\alpha
\right) }\left( B,\left( A^{\prime }\circ B\right) ^{-1}\left( H\circ
B\right) \right) ,  \label{s}
\end{equation}%
from which we gave%
\begin{equation*}
P_{n}^{\left( n+1+\alpha \right) }\left( A,H\right) =\underset{k=0}{\overset{%
n}{\sum }}B_{n,k}\left( \overline{\mathbf{a}}\right) P_{k}^{\left( \alpha
\right) }\left( A,\left( A^{\prime }\right) ^{-1}H\right) ,
\end{equation*}%
and, in particular, if we let $\left( \frac{t}{A\left( t\right) }\right)
^{\alpha }\left( A^{\prime }\left( t\right) \right) ^{x}=\underset{n\geq 0}{%
\sum }T_{n}^{\left( \alpha \right) }\left( x\mid A\right) \frac{t^{n}}{n!},$
we get%
\begin{equation*}
T_{n}^{\left( n+1+\alpha \right) }\left( x+1\mid A\right) =\underset{k=0}{%
\overset{n}{\sum }}B_{n,k}\left( \overline{\mathbf{a}}\right) T_{k}^{\left(
\alpha \right) }\left( x\mid A\right) .
\end{equation*}%
The following theorem generalizes this last identity as follows:

\begin{theorem}
\label{T2}There holds%
\begin{equation}
P_{n}^{\left( \alpha \right) }\left( x,y\mid A,B,H\right) =\underset{k=0}{%
\overset{n}{\sum }}B_{n+r,k+r}^{\left( r\right) }\left( \mathbf{a;b}\right)
P_{k}^{\left( k+1+\alpha \right) }\left( x+1,y-r\mid A,B,H\right) ,
\label{e1}
\end{equation}%
or equivalently%
\begin{equation}
P_{n}^{\left( n+1+\alpha \right) }\left( x,y\mid A,B,H\right) =\underset{k=0}%
{\overset{n}{\sum }}B_{n+r,k+r}^{\left( r\right) }\left( \overline{\mathbf{a}%
};\left( \mathbf{b\circ }\overline{\mathbf{a}}\right) ^{-1}\right)
P_{k}^{\left( \alpha \right) }\left( x-1,y+r\mid A,B,H\right) .  \label{f1}
\end{equation}
\end{theorem}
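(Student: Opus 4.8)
The plan is to establish the first identity \eqref{e1} by comparing exponential generating functions, and then to obtain \eqref{f1} for free by inverting \eqref{e1} through Theorem \ref{T1}. Throughout I would write $\phi(t)=t/A(t)$, which is a power series with $\phi(0)=1/a_1=1$ since $A'(0)=1$; this guarantees both that the compositional inverse $\overline{A}$ exists and that the reversion equation used below has a unique formal solution vanishing at $0$.

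First I would rewrite the right-hand side of \eqref{e1} as a statement about generating functions. Put $c_k=P_{k}^{(k+1+\alpha)}(x+1,y-r\mid A,B,H)$. Using the defining generating function of the partial $r$-Bell polynomials and interchanging the order of summation gives
\[
\sum_{n\geq 0}\left(\sum_{k=0}^{n}B_{n+r,k+r}^{(r)}(\mathbf{a};\mathbf{b})\,c_k\right)\frac{t^{n}}{n!}=(B(t))^{r}\sum_{k\geq 0}\frac{c_k}{k!}(A(t))^{k}=(B(t))^{r}F(A(t)),
\]
where $F(s)=\sum_{k\geq 0}\frac{c_k}{k!}s^{k}$. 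Thus everything reduces to evaluating $F(s)$ in closed form and then setting $s=A(t)$.

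The crux is the evaluation of $F$. From the definition \eqref{a}, each coefficient is a coefficient extraction with a $k$-dependent exponent,
\[
\frac{c_k}{k!}=[t^{k}]\bigl((\phi(t))^{k}\,G(t)\bigr),\qquad G(t)=(\phi(t))^{1+\alpha}(A'(t))^{x+1}(B(t))^{y-r}H(t),
\]
so that $F(s)=\sum_{k\geq 0}s^{k}[t^{k}]((\phi(t))^{k}G(t))$ is a diagonal generating function. I would sum it by the generating-function form of the Lagrange--B\"urmann inversion formula: if $w=w(s)$ solves $w=s\phi(w)$, then $F(s)=G(w)/\bigl(1-s\phi'(w)\bigr)$. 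Here $w=s\phi(w)$ is exactly $A(w)=s$, hence $w=\overline{A}(s)$, which is where the compositional inverse enters. A short computation of the Jacobian factor, using $A(w)=s$, gives $1-s\phi'(w)=wA'(w)/A(w)$, and after cancellation one finds
\[
F(s)=(\phi(w))^{\alpha}(A'(w))^{x}(B(w))^{y-r}H(w),\qquad w=\overline{A}(s).
\]

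Finally I would substitute $s=A(t)$, so that $w=\overline{A}(A(t))=t$ and $(B(t))^{r}F(A(t))=(t/A(t))^{\alpha}(A'(t))^{x}(B(t))^{y}H(t)$, the $B$-exponent collapsing back to $y$; this is precisely the generating function \eqref{a} of $P_{n}^{(\alpha)}(x,y\mid A,B,H)$, and comparing coefficients of $t^{n}/n!$ yields \eqref{e1}. To get \eqref{f1} I would read \eqref{e1} as $U_n=\sum_{k}B_{n+r,k+r}^{(r)}(\mathbf{a};\mathbf{b})V_k$ with $U_n=P_{n}^{(\alpha)}(x,y\mid A,B,H)$ and $V_k=P_{k}^{(k+1+\alpha)}(x+1,y-r\mid A,B,H)$, apply the inverse relation of Theorem \ref{T1}, and relabel $x\mapsto x-1$, $y\mapsto y+r$. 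The main obstacle is the second step: applying Lagrange--B\"urmann with a $k$-dependent exponent, correctly identifying the reversion root as $w=\overline{A}(s)$, and keeping the bookkeeping of the exponents of $\phi$, $A'$, $B$ and of the Jacobian factor straight so that the powers recombine cleanly; everything else is routine formal power-series manipulation.
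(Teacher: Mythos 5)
Your proof is correct. It shares the paper's outer skeleton---reduce everything to a comparison of exponential generating functions via the defining series of $B_{n+r,k+r}^{(r)}$, then obtain the companion identity through the inverse relation of Theorem \ref{T1}---but the key middle step is genuinely different. The paper kills the $k$-dependence of the order term by term: invoking identity (\ref{s}) from \cite{mih3}, it rewrites each $P_{k}^{(k+1+\alpha)}(x,y\mid A,B,H)$ as $P_{k}^{(-\alpha)}(1-x,y\mid \overline{A},B\circ \overline{A},H\circ \overline{A})$, a family whose order no longer depends on $k$, so that the inner sum is simply the generating function of that fixed family evaluated at $A(t)$, and $\overline{A}(A(t))=t$ collapses it. You instead keep the $k$-dependent exponent and evaluate the diagonal sum $F(s)=\sum_{k\geq 0}s^{k}[t^{k}]\bigl((\phi(t))^{k}G(t)\bigr)$ in one stroke with the Lagrange--B\"urmann formula; your identification of the reversion root $w=\overline{A}(s)$, the Jacobian $1-s\phi'(w)=wA'(w)/A(w)$, the cancellation giving $F(s)=(\phi(w))^{\alpha}(A'(w))^{x}(B(w))^{y-r}H(w)$, and the final substitution $s=A(t)$ are all correct. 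The two routes are two faces of the same inversion phenomenon---identity (\ref{s}) is itself a Lagrange-type duality---but yours is self-contained modulo the classical Lagrange--B\"urmann theorem, whereas the paper's argument leans on the earlier result of \cite{mih3}; what the paper's route buys in exchange is the explicit intermediate identity relating $P_{k}^{(k+1+\alpha)}$ for $A$ to $P_{k}^{(-\alpha)}$ for $\overline{A}$, which is of independent interest. Your passage from \eqref{e1} to \eqref{f1} also matches the paper's logic: since the two matrices in Theorem \ref{T1} are mutually inverse, the inverse relation applies verbatim to the sequence $V_{k}=P_{k}^{(k+1+\alpha)}(x+1,y-r\mid A,B,H)$ despite its $k$-dependent order, and the relabeling $x\mapsto x-1$, $y\mapsto y+r$ finishes the proof.
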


\begin{proof}
We prove that the two sides of (\ref{f1}) have the same exponential
generating functions. Indeed,%
\begin{eqnarray*}
&&\underset{n\geq 0}{\sum }\left( \underset{k=0}{\overset{n}{\sum }}%
B_{n+r,k+r}^{\left( r\right) }\left( \mathbf{a;b}\right) P_{k}^{\left(
k+1+\alpha \right) }\left( x,y\mid A,B,H\right) \right) \frac{t^{n}}{n!} \\
&=&\underset{k\geq 0}{\sum }P_{k}^{\left( k+1+\alpha \right) }\left( x,y\mid
A,B,H\right) \left( \underset{n\geq k}{\sum }B_{n+r,k+r}^{\left( r\right)
}\left( \mathbf{a;b}\right) \frac{t^{n}}{n!}\right) \\
&=&\underset{k\geq 0}{\sum }P_{k}^{\left( k+1+\alpha \right) }\left( x,y\mid
A,B,H\right) \frac{\left( A\left( t\right) \right) ^{k}}{k!}\left( B\left(
t\right) \right) ^{r}
\end{eqnarray*}%
and, by identity (\ref{s}), the following identity%
\begin{eqnarray*}
P_{k}^{\left( k+1+\alpha \right) }\left( x,y\mid A,B,H\right) &=&D_{t=0}^{k} 
\left[ \left( \frac{t}{A\left( t\right) }\right) ^{k+1+\alpha }\left(
A^{\prime }\left( t\right) \right) ^{x}\left( B\left( t\right) \right)
^{y}H\left( t\right) \right] \\
&=&D_{t=0}^{k}\left[ \left( \frac{t}{\overline{A}\left( t\right) }\right)
^{-\alpha }\left( A^{\prime }\circ \overline{A}\left( t\right) \right)
^{x-1}\left( B\circ \overline{A}\left( t\right) \right) ^{y}\left( H\circ 
\overline{A}\left( t\right) \right) \right] \\
&=&D_{t=0}^{k}\left[ \left( \frac{t}{\overline{A}\left( t\right) }\right)
^{-\alpha }\left( \overline{A}^{\prime }\left( t\right) \right) ^{1-x}\left(
B\circ \overline{A}\left( t\right) \right) ^{y}\left( H\circ \overline{A}%
\left( t\right) \right) \right] \\
&=&P_{k}^{\left( -\alpha \right) }\left( 1-x,y\mid \overline{A},B\circ 
\overline{A},H\circ \overline{A}\right) ,
\end{eqnarray*}%
shows that the last expansion becomes%
\begin{eqnarray*}
&&\underset{k\geq 0}{\sum }P_{k}^{\left( -\alpha \right) }\left( 1-x,y\mid 
\overline{A},B\circ \overline{A},H\circ \overline{A}\right) \frac{\left(
A\left( t\right) \right) ^{k}}{k!}\left( B\left( t\right) \right) ^{r} \\
&=&\left( \frac{A\left( t\right) }{\overline{A}(A\left( t\right) )}\right)
^{-\alpha }\left( \overline{A}^{\prime }\left( A\left( t\right) \right)
\right) ^{1-x}\left( B\left( \overline{A}\left( A\left( t\right) \right)
\right) \right) ^{y}H\left( \overline{A}\left( A\left( t\right) \right)
\right) \left( B\left( t\right) \right) ^{r} \\
&=&\left( \frac{t}{A\left( t\right) )}\right) ^{\alpha }\left( A\left(
t\right) \right) ^{x-1}\left( B\left( t\right) \right) ^{y+r}H\left( t\right)
\\
&=&\underset{n\geq 0}{\sum }P_{n}^{\left( \alpha \right) }\left( x-1,y+r\mid
A,B,H\right) \frac{t^{n}}{n!}.
\end{eqnarray*}%
The first identity of this theorem can be obtained from the second one by
apply Theorem \ref{T1}.
\end{proof}

\noindent Let $\left( L_{n}^{\left( \alpha ,\beta \right) }\left( x\right)
;n\geq 0\right) $ be a sequence of polynomials defined by%
\begin{equation*}
\underset{n\geq 0}{\sum }L_{n}^{\left( \alpha ,\beta \right) }\left(
x\right) t^{n}=\left( 1-t\right) ^{\alpha }\exp \left( x\left( \left(
1-t\right) ^{\beta }-1\right) \right) .
\end{equation*}%
Then, the application of Theorem \ref{T2} to the sequence $\left(
L_{n}^{\left( \alpha ,\beta \right) }\left( x\right) ;n\geq 0\right) $ gives:

\begin{corollary}
There holds%
\begin{equation*}
L_{n}^{\left( \alpha ,\beta \right) }\left( x+y\right) =\underset{k=0}{%
\overset{n}{\sum }}\left( -1\right) ^{k}L_{n-k}^{\left( k,\beta \right)
}\left( x\right) \left( L_{k}^{\left( -\alpha -2,-\beta \right) }\left(
y\right) -2kL_{k-1}^{\left( -\alpha -3,-\beta \right) }\left( y\right)
\right) .
\end{equation*}
\end{corollary}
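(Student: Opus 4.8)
The plan is to prove the two sides have the same ordinary generating function in $t$, using the recurrence of Theorem \ref{T2} specialized to the sequence $L_n^{(\alpha,\beta)}$. First I would record the defining relation $\sum_{m\geq 0}L_m^{(\gamma,\beta)}(x)t^m=(1-t)^{\gamma}\exp\!\left(x\left((1-t)^{\beta}-1\right)\right)$ for an \emph{arbitrary} first parameter $\gamma$, because the inner summand carries the running superscript $k$. Multiplying the asserted identity by $t^n$ and summing over $n$ turns the right-hand side into a Cauchy product; interchanging the two summations and using the displayed generating function with $\gamma=k$ lets me replace $\sum_{n\geq k}L_{n-k}^{(k,\beta)}(x)t^n$ by $t^{k}(1-t)^{k}\exp\!\left(x\left((1-t)^{\beta}-1\right)\right)$. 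Pulling the common factor $\exp\!\left(x\left((1-t)^{\beta}-1\right)\right)$ out of the sum reduces the whole statement to a single-variable identity: with $c_k:=L_k^{(-\alpha-2,-\beta)}(y)-2kL_{k-1}^{(-\alpha-3,-\beta)}(y)$, one must show $\sum_{k\geq 0}c_k\bigl(-t(1-t)\bigr)^{k}=(1-t)^{\alpha}\exp\!\left(y\left((1-t)^{\beta}-1\right)\right)$.

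This last identity is the crux. I would evaluate the $y$-dependent series again through the same defining generating function, now with second argument $-\beta$ (which is exactly the source of the inverse exponent $-\beta$ in the statement): setting $F(z)=\sum_k L_k^{(-\alpha-2,-\beta)}(y)z^{k}=(1-z)^{-\alpha-2}\exp\!\left(y\left((1-z)^{-\beta}-1\right)\right)$ and similarly for the $-\alpha-3$ series. The correction term is handled as a derivative contribution, namely $\sum_k 2kL_{k-1}^{(-\alpha-3,-\beta)}(y)z^{k}=2z\frac{d}{dz}\!\left(z\sum_k L_k^{(-\alpha-3,-\beta)}(y)z^{k}\right)$, so that the bracketed combination becomes an explicit algebraic factor times the single exponential $\exp\!\left(y\left((1-z)^{-\beta}-1\right)\right)$. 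It then remains to perform the substitution in $z$ induced by the weights $\bigl(-t(1-t)\bigr)^{k}$ and to check that this algebraic factor collapses to the plain power $(1-t)^{\alpha}$ while the exponential passes to $\exp\!\left(y\left((1-t)^{\beta}-1\right)\right)$.

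The step I expect to be the main obstacle is precisely this final simplification after the substitution: it is the content of Theorem \ref{T2} (equivalently, a Lagrange--B\"urmann inversion) applied to the series whose compositional inverse sends $(1-t)^{\beta}\mapsto(1-t)^{-\beta}$, and the derivative factor above is what Lagrange--B\"urmann contributes through the $H'$ term. To make the match with \eqref{e1} explicit I would choose $A,B,H$ so that $n!\,L_n^{(\alpha,\beta)}(w)$ is the bivariate polynomial $P_n^{(\cdot)}(\cdot\mid A,B,H)$, with $B(t)=\exp\!\left((1-t)^{\beta}-1\right)$ carrying the exponential, verify the normalizations $A(0)=0$ and $A'(0)=B(0)=1$, and read off the $r$-Bell factor $B_{n+r,k+r}^{(r)}(\mathbf{a};\mathbf{b})$ as the weight $(-1)^{k}L_{n-k}^{(k,\beta)}(x)$ and $P_k^{(k+1+\alpha)}$ as the $y$-bracket. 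Because the delicate part is getting the constant shifts $-\alpha-2,\ -\alpha-3$ and the coefficient $2$ exactly right, I would finally confirm the low-order cases $n=0,1,2$ by direct series expansion, since these orders pin down every constant in the correction term and are where an off-by-a-constant slip is easiest to make and easiest to catch.
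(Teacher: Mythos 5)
Your generating-function reduction is correct and faithful to the statement: the corollary is indeed equivalent to the single identity $\sum_{k\geq 0}c_{k}\left(-t(1-t)\right)^{k}=(1-t)^{\alpha}\exp\left(y\left((1-t)^{\beta}-1\right)\right)$ with $c_{k}=L_{k}^{(-\alpha-2,-\beta)}(y)-2kL_{k-1}^{(-\alpha-3,-\beta)}(y)$, and your derivative device for the $2k$-term is also right. But the step you yourself flagged as the obstacle is where the argument breaks, and it cannot be repaired. Under $z=-t(1-t)$ one has $1-z=1+t-t^{2}$, which is not $(1-t)^{-1}$; hence the closed form of $\sum_{k}c_{k}z^{k}$ becomes an expression in powers of $1+t-t^{2}$ together with $\exp\left(y\left((1+t-t^{2})^{-\beta}-1\right)\right)$, and no algebraic simplification turns this into $(1-t)^{\alpha}\exp\left(y\left((1-t)^{\beta}-1\right)\right)$. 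The substitution that genuinely sends $(1-t)^{\beta}$ to $(1-t)^{-\beta}$ is $z=-t/(1-t)$ (then $1-z=(1-t)^{-1}$), and that is not what the Cauchy product hands you. Appealing to Theorem \ref{T2} or to Lagrange--B\"urmann does not close this: the mechanism of Theorem \ref{T2} is the $k$-dependent order in $P_{k}^{(k+1+\alpha)}$, handled through identity (\ref{s}); it is not a statement about composing a fixed series $C(z)=\sum_{k}c_{k}z^{k}$ with $-t(1-t)$, which is all that your reduction leaves room for.

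The deeper problem is that the statement itself is false, and the low-order check you proposed is precisely what detects it. Take $n=2$, $\alpha=0$, $x=y=0$; then $L_{m}^{(\gamma,\delta)}(0)$ is the coefficient of $t^{m}$ in $(1-t)^{\gamma}$, so the left side is $L_{2}^{(0,\beta)}(0)=0$, while on the right the $k=0$ and $k=1$ terms vanish and the $k=2$ term is $L_{2}^{(-2,-\beta)}(0)-4L_{1}^{(-3,-\beta)}(0)=3-12=-9$. The error originates in the paper's own proof of this corollary: expanding $(1-2t)\sum_{n}L_{n}^{(\alpha,\beta)}(y)t^{n}$ gives coefficients $L_{n}^{(\alpha,\beta)}(y)-2L_{n-1}^{(\alpha,\beta)}(y)$, not $L_{n}^{(\alpha,\beta)}(y)-2nL_{n-1}^{(\alpha,\beta)}(y)$, and the order produced by (\ref{e1}) is $\alpha-k-1$, not $k+1+\alpha$. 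What Theorem \ref{T2} actually yields with $A(t)=t-t^{2}$ is $L_{n}^{(\alpha,\beta)}(y)=\sum_{k=0}^{n}L_{n-k}^{(k,\beta)}(r)\left(L_{k}^{(\alpha-k-1,\beta)}(y-r)-2L_{k-1}^{(\alpha-k-1,\beta)}(y-r)\right)$; applying the reflection $L_{n}^{(\alpha,\beta)}(x)=(-1)^{n}L_{n}^{(n-1-\alpha,-\beta)}(x)$ turns the inner orders into $2k-\alpha$ and $2k-1-\alpha$, which remain $k$-dependent, so the clean constants $-\alpha-2$ and $-\alpha-3$ of the statement cannot arise. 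Your instinct to verify $n=0,1,2$ directly was exactly right; carried out, it shows that the task here is to correct the statement, not to prove it.
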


\begin{proof}
For $A\left( t\right) =t-t^{2},$ $B\left( t\right) =\exp \left( \left(
1-t\right) ^{\beta }-1\right) $ and $H\left( t\right) =1$ we get%
\begin{eqnarray*}
\underset{n\geq 0}{\sum }B_{n+r,k+r}^{\left( r\right) }\left( \mathbf{a;b}%
\right) \frac{t^{n}}{n!} &=&\frac{t^{k}}{k!}\left( 1-t\right) ^{k}\exp
\left( r\left( \left( 1-t\right) ^{\beta }-1\right) \right)  \\
&=&\frac{1}{k!}\underset{n\geq k}{\sum }L_{n-k}^{\left( k,\beta \right)
}\left( r\right) t^{n}, \\
\underset{n\geq 0}{\sum }P_{n}^{\left( -\alpha \right) }\left( 1,y\mid
A,B,1\right) \frac{t^{n}}{n!} &=&\left( \frac{t}{t-t^{2}}\right) ^{-\alpha
}\left( 1-2t\right) \exp \left( y\left( \left( 1-t\right) ^{\beta }-1\right)
\right)  \\
&=&\underset{n\geq 0}{\sum }\left( L_{n}^{\left( \alpha ,\beta \right)
}\left( y\right) -2nL_{n-1}^{\left( \alpha ,\beta \right) }\left( y\right)
\right) t^{n}, \\
\underset{n\geq 0}{\sum }P_{n}^{\left( -\alpha \right) }\left( 0,y\mid
A,B,1\right) \frac{t^{n}}{n!} &=&\underset{n\geq 0}{\sum }L_{n}^{\left(
\alpha ,\beta \right) }\left( y\right) t^{n}.
\end{eqnarray*}%
So, we get%
\begin{eqnarray*}
B_{n+r,k+r}^{\left( r\right) }\left( \mathbf{a;b}\right)  &=&\frac{n!}{k!}%
L_{n-k}^{\left( k,\beta \right) }\left( r\right) , \\
P_{n}^{\left( -\alpha \right) }\left( 1,y\mid A,B,1\right)  &=&n!\left(
L_{n}^{\left( \alpha ,\beta \right) }\left( y\right) -2nL_{n-1}^{\left(
\alpha ,\beta \right) }\left( y\right) \right) , \\
P_{n}^{\left( -\alpha \right) }\left( 0,y\mid A,B,1\right) 
&=&n!L_{n}^{\left( \alpha ,\beta \right) }\left( y\right) .
\end{eqnarray*}%
Then, from (\ref{e1}) we get 
\begin{equation*}
L_{n}^{\left( \alpha ,\beta \right) }\left( y\right) =\underset{k=0}{\overset%
{n}{\sum }}L_{n-k}^{\left( k,\beta \right) }\left( r\right) \left(
L_{k}^{\left( k+1+\alpha ,\beta \right) }\left( y-r\right)
-2kL_{k-1}^{\left( k+1+\alpha ,\beta \right) }\left( y-r\right) \right) 
\end{equation*}%
and by the identity $L_{n}^{\left( \alpha ,\beta \right) }\left( x\right)
=\left( -1\right) ^{n}L_{n}^{\left( n-1-\alpha ,-\beta \right) }\left(
x\right) $ \cite{mih5} we get%
\begin{equation*}
L_{n}^{\left( \alpha ,\beta \right) }\left( r+y\right) =\underset{k=0}{%
\overset{n}{\sum }}\left( -1\right) ^{k}L_{n-k}^{\left( k,\beta \right)
}\left( r\right) \left( L_{k}^{\left( -\alpha -2,-\beta \right) }\left(
y\right) -2kL_{k-1}^{\left( -\alpha -3,-\beta \right) }\left( y\right)
\right) .
\end{equation*}%
Now, since the polynomial%
\begin{equation*}
Q_{n}\left( x\right) =L_{n}^{\left( \alpha ,\beta \right) }\left( x+y\right)
-\underset{k=0}{\overset{n}{\sum }}\left( -1\right) ^{k}L_{n-k}^{\left(
k,\beta \right) }\left( x\right) \left( L_{k}^{\left( -\alpha -2,-\beta
\right) }\left( y\right) -2kL_{k-1}^{\left( -\alpha -3,-\beta \right)
}\left( y\right) \right) 
\end{equation*}%
vanishes on each non-negative integer $r,$ the desired identity follows.
\end{proof}

\noindent Recall that the high order Bernoulli polynomials of the first kind 
$\left( B_{n}^{\left( \alpha \right) }\left( x\right) \right) $ are defined
by%
\begin{equation*}
\underset{n\geq 0}{\sum }B_{n}^{\left( \alpha \right) }\left( x\right) \frac{%
t^{n}}{n!}=\left( \frac{t}{\exp \left( t\right) -1}\right) ^{\alpha }\exp
\left( xt\right)
\end{equation*}%
with $B_{n}^{\left( 1\right) }\left( x\right) =B_{n}\left( x\right) $ is the 
$n$-th Bernoulli polynomials of the first kind, see \cite{luk,pra,rom,sri2}.

\begin{corollary}
There hold%
\begin{eqnarray}
B_{n}^{\left( \alpha \right) }\left( x\right) &=&\underset{k=0}{\overset{n}{%
\sum }}\frac{W_{m,r}\left( n,k\right) }{m^{n-k}}B_{k}^{\left( k+1+\alpha
\right) }\left( x+1-\frac{r}{m}\right) ,  \label{f} \\
B_{n}^{\left( n+1+\alpha \right) }\left( x\right) &=&\underset{k=0}{\overset{%
n}{\sum }}\frac{w_{m,r}\left( n,k\right) }{m^{n-k}}B_{k}^{\left( \alpha
\right) }\left( x-1+\frac{r}{m}\right)  \label{fbis}
\end{eqnarray}%
and%
\begin{eqnarray*}
B_{n}^{\left( n+p+1\right) }\left( 1-\frac{r}{m}\right) &=&\frac{1}{m^{n}}%
\binom{n+p}{p}^{-1}w_{m,r}\left( n+p,p\right) , \\
B_{n}^{\left( p\right) }\left( \frac{r-s}{m}\right) &=&\frac{1}{m^{n}}%
\underset{k=0}{\overset{n}{\sum }}\binom{k+p}{p}^{-1}W_{m,r}\left(
n,k\right) w_{m,s}\left( k+p,p\right) .
\end{eqnarray*}
\end{corollary}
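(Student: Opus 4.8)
The plan is to specialize Theorem \ref{T2} to the Whitney setting of the Example, reduce the first two identities to the case $y=0$, and then dispatch the two evaluation identities by a generating-function computation combined with a Lagrange inversion. First I would take $A(t)=\frac{\exp(mt)-1}{m}$, $B(t)=\exp(t)$ and $H(t)=1$, exactly as in the Example, so that $B_{n+r,k+r}^{(r)}(\mathbf{a};\mathbf{b})=W_{m,r}(n,k)$ and $B_{n+r,k+r}^{(r)}(\overline{\mathbf{a}};(\mathbf{b}\circ\overline{\mathbf{a}})^{-1})=w_{m,r}(n,k)$. The substitution $u=mt$ in (\ref{a}) gives
\begin{equation*}
\left(\frac{t}{A(t)}\right)^{\alpha}(A'(t))^{x}(B(t))^{y}=\left(\frac{u}{e^{u}-1}\right)^{\alpha}e^{(x+y/m)u},\qquad u=mt,
\end{equation*}
so that $P_{n}^{(\alpha)}(x,y\mid A,B,1)=m^{n}B_{n}^{(\alpha)}(x+\tfrac{y}{m})$ for all real $\alpha,x,y$.

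Inserting this evaluation and the two Whitney identifications into (\ref{e1}) and (\ref{f1}), dividing by $m^{n}$ and finally setting $y=0$, I would obtain (\ref{f}) and (\ref{fbis}) at once; the argument of the inner Bernoulli polynomial becomes $x+1-\tfrac{r}{m}$ in the first case and $x-1+\tfrac{r}{m}$ in the second, as required.

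For the penultimate identity I would reindex the generating function of the Example at $k=p$ to get
\begin{equation*}
\sum_{n\geq 0}w_{m,r}(n+p,p)\frac{t^{n+p}}{(n+p)!}=\frac{1}{p!}\left(\frac{\ln(1+mt)}{m}\right)^{p}(1+mt)^{-r/m}.
\end{equation*}
Using $\binom{n+p}{p}^{-1}\frac{1}{(n+p)!}=\frac{1}{p!\,n!}$ and putting $s=mt$, the claim is equivalent to the Nörlund-type identity
\begin{equation*}
\sum_{n\geq 0}B_{n}^{(n+p+1)}\!\left(1-\frac{r}{m}\right)\frac{s^{n}}{n!}=\left(\frac{\ln(1+s)}{s}\right)^{p}(1+s)^{-r/m}.
\end{equation*}
This is the step I expect to be the main obstacle, since the order $n+p+1$ grows with $n$ and the left-hand side cannot be read off termwise from (\ref{a}). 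I would treat it by the diagonal form of Lagrange inversion: writing $B_{n}^{(n+p+1)}(1-\tfrac{r}{m})/n!=[u^{n}]\,G(u)\psi(u)^{n}$ with $\psi(u)=\frac{u}{e^{u}-1}$ and $G(u)=\psi(u)^{p+1}e^{(1-r/m)u}$, the sum equals $G(w)/(1-s\psi'(w))$ where $w=s\psi(w)$, i.e. $w=\ln(1+s)$. A short computation then gives $G(w)=(\tfrac{\ln(1+s)}{s})^{p+1}(1+s)^{1-r/m}$ and $1-s\psi'(w)=w(1+s)/s$, and the quotient collapses to the right-hand side above.

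Finally, for the last identity I would substitute the penultimate one, applied with $s$ in place of $r$, namely $w_{m,s}(k+p,p)=\binom{k+p}{p}m^{k}B_{k}^{(k+p+1)}(1-\tfrac{s}{m})$, into its right-hand side. The factors $\binom{k+p}{p}^{\pm1}$ cancel and the expression becomes $\sum_{k=0}^{n}\frac{W_{m,r}(n,k)}{m^{n-k}}B_{k}^{(k+1+p)}(1-\tfrac{s}{m})$, which is precisely (\ref{f}) evaluated at $\alpha=p$ and $x=\tfrac{r-s}{m}$, since then $x+1-\tfrac{r}{m}=1-\tfrac{s}{m}$. Hence the sum equals $B_{n}^{(p)}(\tfrac{r-s}{m})$, which completes the proof.
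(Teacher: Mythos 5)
Your proof is correct. For identities (\ref{f}) and (\ref{fbis}) and for the final identity you follow essentially the paper's own route: the same specialization $A(t)=\frac{1}{m}(e^{mt}-1)$, $B(t)=e^{t}$, $H=1$ with $P_{n}^{(\alpha)}(x,y\mid A,B,1)=m^{n}B_{n}^{(\alpha)}(x+\frac{y}{m})$ plugged into Theorem \ref{T2}, and then the same substitution of $w_{m,s}(k+p,p)=\binom{k+p}{p}m^{k}B_{k}^{(k+p+1)}(1-\frac{s}{m})$ into (\ref{f}) at $\alpha=p$, $x=\frac{r-s}{m}$. You genuinely diverge only on the penultimate evaluation. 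The paper obtains it without leaving its own toolkit: since $\frac{d}{dx}B_{n}^{(\alpha)}(x)=nB_{n-1}^{(\alpha)}(x)$, differentiating (\ref{fbis}) $p$ times (with $n+p$ in place of $n$) gives $B_{n}^{(n+p+1+\alpha)}(x)=\binom{n+p}{p}^{-1}\sum_{k=0}^{n}\binom{k+p}{p}\frac{w_{m,r}(n+p,k+p)}{m^{n-k}}B_{k}^{(\alpha)}\bigl(x-1+\frac{r}{m}\bigr)$; taking $\alpha=0$ turns the summands into monomials $\bigl(x-1+\frac{r}{m}\bigr)^{k}$, and evaluating at $x=1-\frac{r}{m}$ kills every term except $k=0$. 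You instead reduce the evaluation to the N\"{o}rlund-type generating function $\sum_{n\geq 0}B_{n}^{(n+p+1)}\bigl(1-\frac{r}{m}\bigr)\frac{s^{n}}{n!}=\bigl(\frac{\ln(1+s)}{s}\bigr)^{p}(1+s)^{-r/m}$ and prove it by the diagonal form of Lagrange inversion; your computation checks out: $w=\ln(1+s)$ solves $w=s\psi(w)$, $G(w)=\bigl(\frac{\ln(1+s)}{s}\bigr)^{p+1}(1+s)^{1-r/m}$, $1-s\psi'(w)=\frac{w(1+s)}{s}$, and the quotient collapses as claimed. What each approach buys: the paper's differentiation trick is elementary (no external inversion theorem) and yields the displayed identity for arbitrary $\alpha$ as a by-product; your route requires Lagrange inversion but attacks head-on the difficulty you correctly identified (the order $n+p+1$ grows with $n$, so no fixed generating function applies termwise) and delivers the closed generating function for the diagonal sequence $B_{n}^{(n+p+1)}\bigl(1-\frac{r}{m}\bigr)$ as a bonus. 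One slip to correct: the coefficient manipulation should read $\binom{n+p}{p}\frac{1}{(n+p)!}=\frac{1}{p!\,n!}$, not $\binom{n+p}{p}^{-1}\frac{1}{(n+p)!}=\frac{1}{p!\,n!}$; the equivalence you state right after it is nevertheless the correct one, so nothing downstream is affected.
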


\begin{proof}
For $A\left( t\right) =\frac{1}{m}\left( \exp \left( mt\right) -1\right) ,$ $%
B\left( t\right) =\exp \left( t\right) $ and $H\left( t\right) =1$ we get%
\begin{eqnarray*}
\overline{A}\left( t\right) &=&\frac{1}{m}\ln \left( 1+mt\right) , \\
\left( B\left( \overline{A}\left( t\right) \right) \right) ^{-1} &=&\left(
1+mt\right) ^{-\frac{1}{m}}, \\
B_{n+r,k+r}^{\left( r\right) }\left( \mathbf{a};\mathbf{b}\right)
&=&W_{m,r}\left( n,k\right) , \\
B_{n+r,k+r}^{\left( r\right) }\left( \overline{\mathbf{a}};\left( \mathbf{%
b\circ }\overline{\mathbf{a}}\right) ^{-1}\right) &=&w_{m,r}\left(
n,k\right) , \\
P_{n}^{\left( \alpha \right) }\left( x,y\mid A,B,1\right)
&=&m^{n}B_{n}^{\left( \alpha \right) }\left( x+\frac{y}{m}\right) .
\end{eqnarray*}%
Then, from Theorem \ref{T2} we get%
\begin{eqnarray*}
B_{n}^{\left( \alpha \right) }\left( x\right) &=&\underset{k=0}{\overset{n}{%
\sum }}\frac{W_{m,r}\left( n,k\right) }{m^{n-k}}B_{k}^{\left( k+1+\alpha
\right) }\left( x+1-\frac{r}{m}\right) , \\
B_{n}^{\left( n+1+\alpha \right) }\left( x\right) &=&\underset{k=0}{\overset{%
n}{\sum }}\frac{w_{m,r}\left( n,k\right) }{m^{n-k}}B_{k}^{\left( \alpha
\right) }\left( x-1+\frac{r}{m}\right) ,
\end{eqnarray*}%
and, since $\frac{d}{dx}B_{n}^{\left( \alpha \right) }\left( x\right)
=nB_{n-1}^{\left( \alpha \right) }\left( x\right) ,$ if one differenciate
the two sides of this last identity $p$ times he obtain%
\begin{equation*}
B_{n}^{\left( n+p+1+\alpha \right) }\left( x\right) =\binom{n+p}{p}^{-1}%
\underset{k=0}{\overset{n}{\sum }}\binom{k+p}{p}\frac{w_{m,r}\left(
n+p,k+p\right) }{m^{n-k}}B_{k}^{\left( \alpha \right) }\left( x-1+\frac{r}{m}%
\right) .
\end{equation*}%
Then, for $\alpha =0,$ there holds%
\begin{equation*}
B_{n}^{\left( n+p+1\right) }\left( x\right) =\binom{n+p}{p}^{-1}\underset{k=0%
}{\overset{n}{\sum }}\binom{k+p}{p}\frac{w_{m,r}\left( n+p,k+p\right) }{%
m^{n-k}}\left( x-1+\frac{r}{m}\right) ^{k}
\end{equation*}%
which gives for $x=1-\frac{r}{m}:$%
\begin{equation*}
B_{n}^{\left( n+p+1\right) }\left( 1-\frac{r}{m}\right) =\binom{n+p}{p}^{-1}%
\frac{w_{m,r}\left( n+p,p\right) }{m^{n}}.
\end{equation*}%
Upon using this identity, identity (\ref{f}) becomes when $\alpha =p:$%
\begin{equation*}
B_{n}^{\left( p\right) }\left( \frac{r-s}{m}\right) =\frac{1}{m^{n}}\underset%
{k=0}{\overset{n}{\sum }}\binom{k+p}{p}^{-1}W_{m,r}\left( n,k\right)
w_{m,s}\left( k+p,p\right) .
\end{equation*}
\end{proof}

\noindent The high order Bernoulli polynomials of the second kind $\left(
b_{n}^{\left( \alpha \right) }\left( x\right) \right) $ can be defined by%
\begin{equation*}
\underset{n\geq 0}{\sum }b_{n}^{\left( \alpha \right) }\left( x\right) \frac{%
t^{n}}{n!}=\left( \frac{t}{\ln \left( 1+t\right) }\right) ^{\alpha }\left(
1+t\right) ^{x}
\end{equation*}%
with $b_{n}^{\left( 1\right) }\left( x\right) =b_{n}\left( x\right) $ is the 
$n$-th Bernoulli polynomial of the second kind, see \cite{luk,pra,rom,sri2}.

\begin{corollary}
There hold%
\begin{eqnarray}
b_{n}^{\left( \alpha \right) }\left( x\right) &=&\underset{k=0}{\overset{n}{%
\sum }}\frac{w_{m,r}\left( n,k\right) }{m^{n-k}}b_{k}^{\left( k+1+\alpha
\right) }\left( x-1+\frac{r}{m}\right) ,  \label{h} \\
b_{n}^{\left( n+1+\alpha \right) }\left( x\right) &=&\underset{k=0}{\overset{%
n}{\sum }}\frac{W_{m,r}\left( n,k\right) }{m^{n-k}}b_{k}^{\left( \alpha
\right) }\left( x+1-\frac{r}{m}\right)  \label{hbis}
\end{eqnarray}%
and%
\begin{eqnarray*}
b_{n}^{\left( n+p+1\right) }\left( 1+\frac{r}{m}\right) &=&\frac{1}{m^{n}}%
\binom{n+p}{p}^{-1}W_{m,r}\left( n+p,p\right) , \\
b_{n}^{\left( p\right) }\left( \frac{s-r}{m}\right) &=&\frac{1}{m^{n}}%
\underset{k=0}{\overset{n}{\sum }}\binom{k+p}{p}^{-1}w_{m,r}\left(
n,k\right) W_{m,s}\left( k+p,p\right) .
\end{eqnarray*}
\end{corollary}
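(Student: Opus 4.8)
The statement is the second–kind analogue of the preceding corollary, so the plan is to rerun that argument with the reciprocal (``dual'') specialization of $A$ and $B$. Since the generating function of $b_n^{(\alpha)}$ carries the factor $\left(\tfrac{t}{\ln(1+t)}\right)^{\alpha}$ and $\ln(1+t)$ is the compositional inverse of $\exp(t)-1$, I would take
\[
A(t)=\tfrac{1}{m}\ln(1+mt),\qquad B(t)=(1+mt)^{-1/m},\qquad H(t)=1 .
\]
These satisfy $A(0)=0$ and $A'(0)=B(0)=1$, so the definition (\ref{a}) applies. The key point is that this choice interchanges the two kinds of Whitney numbers relative to the first–kind corollary.

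First I would record the compositional data $\overline{A}(t)=\tfrac{1}{m}(\exp(mt)-1)$ and $\bigl(B(\overline{A}(t))\bigr)^{-1}=\exp(t)$. Comparing generating functions with the definition of the partial $r$-Bell polynomials then gives $B_{n+r,k+r}^{(r)}(\mathbf{a};\mathbf{b})=w_{m,r}(n,k)$ and $B_{n+r,k+r}^{(r)}(\overline{\mathbf{a}};(\mathbf{b}\circ\overline{\mathbf{a}})^{-1})=W_{m,r}(n,k)$, exactly the reverse assignment to the one used for the first kind. Next, since $A'(t)=(1+mt)^{-1}$, relation (\ref{a}) collapses to
\[
\sum_{n\ge 0}P_n^{(\alpha)}(x,y\mid A,B,1)\frac{t^n}{n!}=\Bigl(\tfrac{mt}{\ln(1+mt)}\Bigr)^{\alpha}(1+mt)^{-x-y/m},
\]
so that $P_n^{(\alpha)}(x,y\mid A,B,1)=m^{n}\,b_n^{(\alpha)}(-x-y/m)$. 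Substituting this and the two Whitney identifications into (\ref{e1}) and (\ref{f1}) of Theorem \ref{T2}, cancelling the common power of $m$, and renaming the common affine argument as a single variable yields (\ref{h}) and (\ref{hbis}) at once.

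For the two evaluation formulas I would differentiate (\ref{hbis}) in $x$, and here lies the main obstacle: unlike the first kind, the second–kind polynomials obey $\frac{d}{dx}b_n^{(\alpha)}(x)=n\,b_{n-1}^{(\alpha-1)}(x)$, so each differentiation \emph{lowers} the order $\alpha$ together with the degree. Thus I would start from (\ref{hbis}) with $n$ replaced by $n+p$ and a free order, differentiate $p$ times, and only then set the order equal to $p$ so that the $p$-fold drop lands the right–hand factors precisely on $b_k^{(0)}$. The factor $\tfrac{j!}{(j-p)!}$ produced by the derivatives, after reindexing $k=j-p$ and combining with the normalizing $\tfrac{n!}{(n+p)!}$, assembles into $\binom{n+p}{p}^{-1}\binom{k+p}{p}$. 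Using $b_k^{(0)}(z)=(z)_k=z(z-1)\cdots(z-k+1)$ and evaluating at the unique $x$ for which $x+1-r/m=0$ annihilates every term with $k\ge 1$, which leaves the closed form for $b_n^{(n+p+1)}$ as a single multiple of $W_{m,r}(n+p,p)$.

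Finally, to obtain the last identity I would set the order equal to $p$ in (\ref{h}), producing $b_k^{(k+1+p)}$ on the right, and substitute the evaluation formula just derived with $r$ replaced by $s$; matching the shifted argument $x-1+r/m$ to that evaluation point fixes $x=(s-r)/m$, and the two powers of $m$ combine to the prefactor $1/m^{n}$, giving $b_n^{(p)}\bigl(\tfrac{s-r}{m}\bigr)=\tfrac{1}{m^{n}}\sum_k\binom{k+p}{p}^{-1}w_{m,r}(n,k)W_{m,s}(k+p,p)$. The whole computation is routine bookkeeping once the order–lowering differentiation is tracked correctly, which is the only genuinely new feature compared with the Bernoulli polynomials of the first kind.
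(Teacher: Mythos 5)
Your proposal is correct and takes essentially the same approach as the paper's proof: the same specialization $A(t)=\frac{1}{m}\ln(1+mt)$, $B(t)=(1+mt)^{-1/m}$, $H(t)=1$, the same identification $P_{n}^{(\alpha)}(x,y\mid A,B,1)=m^{n}b_{n}^{(\alpha)}\left(-x-\frac{y}{m}\right)$, the same application of Theorem \ref{T2}, and the same order-lowering $p$-fold differentiation of (\ref{hbis}) followed by substitution into (\ref{h}). One remark: exactly like the paper's own proof, your argument evaluates at the point $x=-1+\frac{r}{m}$ (where $x+1-\frac{r}{m}=0$), so what is actually established is $b_{n}^{(n+p+1)}\left(-1+\frac{r}{m}\right)=\frac{1}{m^{n}}\binom{n+p}{p}^{-1}W_{m,r}(n+p,p)$; the argument $1+\frac{r}{m}$ in the stated corollary is a sign misprint, and your version (which agrees with the paper's proof) is the correct one.
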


\begin{proof}
For $A\left( t\right) =\frac{1}{m}\ln \left( 1+mt\right) ,$ $B\left(
t\right) =\left( 1+mt\right) ^{-\frac{1}{m}}$ and $H\left( t\right) =1$ we
get%
\begin{eqnarray*}
\overline{A}\left( t\right) &=&\frac{1}{m}\left( e^{mt}-1\right) , \\
\left( B\left( \overline{A}\left( t\right) \right) \right) ^{-1} &=&e^{t}, \\
B_{n+r,j+r}^{\left( r\right) }\left( \mathbf{a;b}\right) &=&w_{m,r}\left(
n,k\right) , \\
B_{n+r,k+r}^{\left( r\right) }\left( \overline{\mathbf{a}};\left( \mathbf{%
b\circ }\overline{\mathbf{a}}\right) ^{-1}\right) &=&W_{m,r}\left(
n,k\right) , \\
P_{n}^{\left( \alpha \right) }\left( x,y\mid A,B,1\right)
&=&m^{n}b_{n}^{\left( \alpha \right) }\left( -x-\frac{y}{m}\right) .
\end{eqnarray*}%
Then, from Theorem \ref{T2} we get 
\begin{subequations}
\begin{eqnarray*}
b_{n}^{\left( \alpha \right) }\left( x\right) &=&\underset{k=0}{\overset{n}{%
\sum }}\frac{w_{m,r}\left( n,k\right) }{m^{n-k}}b_{k}^{\left( k+1+\alpha
\right) }\left( x-1+\frac{r}{m}\right) , \\
b_{n}^{\left( n+1+\alpha \right) }\left( x\right) &=&\underset{k=0}{\overset{%
n}{\sum }}\frac{W_{m,r}\left( n,k\right) }{m^{n-k}}b_{k}^{\left( \alpha
\right) }\left( x+1-\frac{r}{m}\right) ,
\end{eqnarray*}%
and, since $\frac{d}{dx}b_{n}^{\left( \alpha +1\right) }\left( x\right)
=nb_{n-1}^{\left( \alpha \right) }\left( x\right) ,$ if one differenciate
the two sides of this last identity $p$ times he obtain 
\end{subequations}
\begin{equation*}
b_{n}^{\left( n+p+1+\alpha \right) }\left( x\right) =\binom{n+p}{p}^{-1}%
\underset{k=0}{\overset{n}{\sum }}\binom{k+p}{p}\frac{W_{m,r}\left(
n+p,k+p\right) }{m^{n-k}}b_{k}^{\left( \alpha \right) }\left( x+1-\frac{r}{m}%
\right) .
\end{equation*}%
Then, for $\alpha =0,$ there holds%
\begin{equation*}
b_{n}^{\left( n+p+1\right) }\left( x\right) =\binom{n+p}{p}^{-1}\underset{k=0%
}{\overset{n}{\sum }}\binom{k+p}{p}\frac{W_{m,r}\left( n+p,k+p\right) }{%
m^{n-k}}\left( x+1-\frac{r}{m}\right) _{k}
\end{equation*}%
which gives for $x=-1+\frac{r}{m}:$%
\begin{equation*}
b_{n}^{\left( n+p+1\right) }\left( -1+\frac{r}{m}\right) =\binom{n+p}{p}^{-1}%
\frac{W_{m,r}\left( n+p,p\right) }{m^{n}}.
\end{equation*}%
Upon using this identity, identity (\ref{h}) becomes when $\alpha =p:$%
\begin{equation*}
b_{n}^{\left( p\right) }\left( \frac{s-r}{m}\right) =\frac{1}{m^{n}}\underset%
{k=0}{\overset{n}{\sum }}\binom{k+p}{p}^{-1}w_{m,r}\left( n,k\right)
W_{m,s}\left( k+p,p\right) .
\end{equation*}
\end{proof}

\noindent The above corollaries can be written in their general case as
follows.

\begin{proposition}
There holds%
\begin{equation*}
D_{t=0}^{n}\left[ \left( \frac{t}{A\left( t\right) }\right) ^{p}\left(
B\left( t\right) \right) ^{r-s}\right] =\frac{1}{p!}\underset{k=0}{\overset{n%
}{\sum }}\binom{k+p}{p}^{-1}B_{n+r,k+r}^{\left( r\right) }\left( \mathbf{a;b}%
\right) B_{k+p+s,p+s}^{\left( s\right) }\left( \overline{\mathbf{a}}\mathbf{;%
}\left( \mathbf{b\circ }\overline{\mathbf{a}}\right) ^{-1}\right) .
\end{equation*}
\end{proposition}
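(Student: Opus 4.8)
The plan is to turn the statement into an identity between coefficients of formal power series and then to collapse the sum over $k$ using $\overline{A}(A(t))=t$. Write $[t^{n}]f$ for the coefficient of $t^{n}$ in $f$. As in the proof of Theorem \ref{T2} one has $D_{t=0}^{n}[f]=f^{(n)}(0)=n!\,[t^{n}]f(t)$, so the left-hand side equals $n!\,[t^{n}]\left(\frac{t}{A(t)}\right)^{p}(B(t))^{r-s}$, and it suffices to identify the right-hand side with this quantity.

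The first step is to expand each partial $r$-Bell polynomial on the right through its defining generating function. The definition in the Introduction gives
\[
B_{n+r,k+r}^{(r)}(\mathbf{a};\mathbf{b})=\frac{n!}{k!}\,[t^{n}]\,(A(t))^{k}(B(t))^{r},
\]
and the same definition, applied to the pair $(\overline{\mathbf{a}};(\mathbf{b}\circ\overline{\mathbf{a}})^{-1})$ whose associated series are $\overline{A}(u)$ and $(B(\overline{A}(u)))^{-1}$, with inner index $p$ and total index $k+p$, gives
\[
B_{k+p+s,\,p+s}^{(s)}\big(\overline{\mathbf{a}};(\mathbf{b}\circ\overline{\mathbf{a}})^{-1}\big)=\frac{(k+p)!}{p!}\,[u^{k+p}]\,(\overline{A}(u))^{p}\big(B(\overline{A}(u))\big)^{-s}.
\]
Substituting both into the sum and using $\binom{k+p}{p}^{-1}=\frac{k!\,p!}{(k+p)!}$, the three factorial prefactors telescope to $\binom{k+p}{p}^{-1}\cdot\frac{n!}{k!}\cdot\frac{(k+p)!}{p!}=n!$. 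This factorial bookkeeping is the heart of the matter and the step I expect to require the most care, because the constant standing in front of the sum is decided entirely by it.

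Once the prefactors are cleared, the sum becomes $n!\,[t^{n}](B(t))^{r}\sum_{k\ge 0}(A(t))^{k}[u^{k+p}]H(u)$, with $H(u):=(\overline{A}(u))^{p}(B(\overline{A}(u)))^{-s}$; truncating at $k=n$ is harmless since $(A(t))^{k}=O(t^{k})$ kills the terms with $k>n$ under $[t^{n}]$. As $\overline{A}(u)=O(u)$, the series $H$ has lowest order $u^{p}$, so $\sum_{k\ge 0}v^{k}[u^{k+p}]H(u)=v^{-p}H(v)$ for a formal variable $v$. Setting $v=A(t)$ and using $\overline{A}(A(t))=t$ collapses $H(A(t))$ to $t^{p}(B(t))^{-s}$, whence $\sum_{k\ge 0}(A(t))^{k}[u^{k+p}]H(u)=\left(\frac{t}{A(t)}\right)^{p}(B(t))^{-s}$. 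Multiplying by $(B(t))^{r}$ and restoring $n!$ reproduces $n!\,[t^{n}]\left(\frac{t}{A(t)}\right)^{p}(B(t))^{r-s}$, which is the left-hand side. The one delicate point is this resummation, read as an identity of formal power series; no convergence is needed, since $A(t)=O(t)$ makes every composition well defined.

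Finally, the telescoping above yields exactly $n!$, hence an unweighted sum: the natural identity is $D_{t=0}^{n}[\,\cdots\,]=\sum_{k=0}^{n}\binom{k+p}{p}^{-1}B_{n+r,k+r}^{(r)}(\mathbf{a};\mathbf{b})B_{k+p+s,p+s}^{(s)}(\overline{\mathbf{a}};(\mathbf{b}\circ\overline{\mathbf{a}})^{-1})$, with no factor $\frac{1}{p!}$. The case $n=0$ already confirms this: there the left-hand side is $a_{1}^{-p}b_{1}^{\,r-s}$ (with $a_{1}=A'(0)$, $b_{1}=B(0)$), and the single term $k=0$ of the unweighted sum equals the same, whereas the displayed prefactor $\frac{1}{p!}$ would spoil the equality for $p\ge 2$. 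This is also consistent with the special cases in the preceding corollaries, which carry no $\frac{1}{p!}$, so I would expect that factor in the statement to be a typographical slip.
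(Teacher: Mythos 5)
Your proof is correct, and so is your diagnosis: the factor $\frac{1}{p!}$ in the displayed statement is spurious, and the identity holds for the unweighted sum. Besides your $n=0$ check, two further confirmations: (i) taking $A(t)=t$, $B(t)=e^{t}$ gives $B_{n+r,k+r}^{(r)}(\mathbf{a};\mathbf{b})=\binom{n}{k}r^{n-k}$ and $B_{k+p+s,p+s}^{(s)}\left(\overline{\mathbf{a}};(\mathbf{b}\circ \overline{\mathbf{a}})^{-1}\right)=\binom{k+p}{p}(-s)^{k}$, so the unweighted sum equals $\sum_{k=0}^{n}\binom{n}{k}r^{n-k}(-s)^{k}=(r-s)^{n}$, which is exactly the left-hand side, while the stated version would give $\frac{(r-s)^{n}}{p!}$; (ii) specializing to $A(t)=\frac{e^{mt}-1}{m}$, $B(t)=e^{t}$ reproduces the earlier corollary $B_{n}^{(p)}\left(\frac{r-s}{m}\right)=\frac{1}{m^{n}}\sum_{k=0}^{n}\binom{k+p}{p}^{-1}W_{m,r}(n,k)\,w_{m,s}(k+p,p)$ only when the $\frac{1}{p!}$ is absent. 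The slip in the paper's own proof sits in the evaluation of $P_{k}^{(k+p+1)}(1,-s\mid A,B,1)$: the correct value is
\begin{equation*}
P_{k}^{(k+p+1)}(1,-s\mid A,B,1)=D_{t=0}^{k}\left[ \left( \frac{\overline{A}(t)}{t}\right) ^{p}\left( B\circ \overline{A}(t)\right) ^{-s}\right] =\binom{k+p}{p}^{-1}B_{k+p+s,p+s}^{(s)}\left( \overline{\mathbf{a}};(\mathbf{b}\circ \overline{\mathbf{a}})^{-1}\right) ,
\end{equation*}
because the defining generating function $\frac{1}{p!}(\overline{A}(t))^{p}(B(\overline{A}(t)))^{-s}$ already carries the $\frac{1}{p!}$, so coefficient extraction yields only $k!\,p!/(k+p)!=\binom{k+p}{p}^{-1}$; the paper retains an extra $\frac{1}{p!}$ at this step, and it survives into the final formula.

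On method: the paper derives the sum by applying Theorem \ref{T2} (identity (\ref{e1})) to $P_{n}^{(p)}(0,r-s\mid A,B,1)$ and then converting $P_{k}^{(k+p+1)}(1,-s\mid A,B,1)$ into a partial $s$-Bell polynomial of the inverse data via identity (\ref{s}). You instead expand both Bell polynomials through their generating functions, telescope the factorial prefactors to $n!$, and collapse the sum with the resummation $\sum_{k\geq 0}v^{k}[u^{k+p}]H(u)=v^{-p}H(v)$ at $v=A(t)$, using $\overline{A}(A(t))=t$. That is in substance the same formal-power-series mechanism by which the paper proves Theorem \ref{T2} itself, so your argument is an unrolled, self-contained version of the paper's shorter route; what the explicit bookkeeping buys you --- and what the paper's shortcut lost --- is precisely the correct normalization.
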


\begin{proof}
By definition and by Theorem \ref{T1}, we have%
\begin{eqnarray*}
D_{t=0}^{n}\left[ \left( \frac{t}{A\left( t\right) }\right) ^{p}\left(
B\left( t\right) \right) ^{r-s}\right] &=&P_{n}^{\left( p\right) }\left(
0,r-s\mid A,B,1\right) \\
&=&\underset{k=0}{\overset{n}{\sum }}B_{n+r,k+r}^{\left( r\right) }\left( 
\mathbf{a;b}\right) P_{k}^{\left( k+p+1\right) }\left( 1,-s\mid A,B,1\right)
,
\end{eqnarray*}%
and since%
\begin{eqnarray*}
P_{k}^{\left( k+p+1\right) }\left( 1,-s\mid A,B,1\right)
&=&D_{t=0}^{k}\left( \frac{t}{A\left( t\right) }\right) ^{k+p+1}A^{\prime
}\left( t\right) \left( B\left( t\right) \right) ^{-s} \\
&=&D_{t=0}^{k}\left( \frac{\overline{A}\left( t\right) }{t}\right)
^{p}\left( B\circ \overline{A}\left( t\right) \right) ^{-s} \\
&=&\frac{1}{p!}\binom{k+p}{p}^{-1}B_{k+p+s,p+s}^{\left( s\right) }\left( 
\overline{\mathbf{a}}\mathbf{;}\left( \mathbf{b\circ }\overline{\mathbf{a}}%
\right) ^{-1}\right) ,
\end{eqnarray*}%
it follows $P_{n}^{\left( p\right) }\left( 0,r-s\mid A,B,1\right) =\frac{1}{%
p!}\underset{k=0}{\overset{n}{\sum }}\binom{k+p}{p}^{-1}B_{n+r,k+r}^{\left(
r\right) }\left( \mathbf{a;b}\right) B_{k+p+s,p+s}^{\left( s\right) }\left( 
\mathbf{a;}\left( \mathbf{b\circ }\overline{\mathbf{a}}\right) ^{-1}\right)
. $
\end{proof}

\noindent Similarly, we also have:

\begin{proposition}
There holds%
\begin{equation*}
D_{t=0}^{n}\left[ \left( \frac{t}{A\left( t\right) }\right) ^{p}\left(
B\left( t\right) \right) ^{r}\left( A^{\prime }\left( t\right) \right) ^{-s}%
\right] =\frac{1}{p!}\underset{k=0}{\overset{n}{\sum }}\binom{k+p}{p}%
^{-1}B_{n+r,k+r}^{\left( r\right) }\left( \mathbf{a;b}\right)
B_{k+p+s,p+s}^{\left( s\right) }\left( \overline{\mathbf{a}}\right) .
\end{equation*}
\end{proposition}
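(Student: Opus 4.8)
The plan is to mirror the proof of the preceding proposition, the only change being that the factor $\left(B(t)\right)^{-s}$ there is replaced here by $\left(A'(t)\right)^{-s}$. After the Lagrange-type change of variable this factor turns into a power of $\overline{A}'(t)$, whose coefficients are again the sequence $\overline{\mathbf{a}}$, and this is exactly why the second argument of the inner $s$-Bell polynomial becomes $\overline{\mathbf{a}}$ instead of $\left(\mathbf{b}\circ\overline{\mathbf{a}}\right)^{-1}$.

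First I would recognize the left-hand side as a value of the polynomials defined in (\ref{a}): comparing generating functions gives $D_{t=0}^{n}\left[\left(\tfrac{t}{A(t)}\right)^{p}(B(t))^{r}(A'(t))^{-s}\right]=P_{n}^{(p)}(-s,r\mid A,B,1)$. I would then apply identity (\ref{e1}) of Theorem \ref{T2} with $\alpha=p$, $x=-s$, $y=r$ and $H=1$, which yields
\[
P_{n}^{(p)}(-s,r\mid A,B,1)=\underset{k=0}{\overset{n}{\sum}}B_{n+r,k+r}^{(r)}(\mathbf{a;b})\,P_{k}^{(k+p+1)}(1-s,0\mid A,B,1).
\]
Observe that the second coordinate has been shifted to $y-r=0$, so the factor $B$ drops out of the inner polynomial; this is the structural reason why $\overline{\mathbf{a}}$, rather than $\left(\mathbf{b}\circ\overline{\mathbf{a}}\right)^{-1}$, ends up in both arguments of the inner Bell polynomial.

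The heart of the matter is the evaluation of $P_{k}^{(k+p+1)}(1-s,0\mid A,B,1)$. Writing it as $D_{t=0}^{k}\left[\left(\tfrac{t}{A(t)}\right)^{k+p+1}(A'(t))^{1-s}\right]$ and splitting $(A'(t))^{1-s}=A'(t)\,(A'(t))^{-s}$, I would perform the same substitution $t\mapsto\overline{A}(t)$ used in the preceding proof. Using $A(\overline{A}(t))=t$ and the reciprocity $A'(\overline{A}(t))\,\overline{A}'(t)=1$, the extra $A'$ is absorbed by the Jacobian of the change of variable, $\left(\tfrac{t}{A(t)}\right)^{k+p+1}$ collapses to $\left(\tfrac{\overline{A}(t)}{t}\right)^{p}$, and $(A'(t))^{-s}$ becomes $(\overline{A}'(t))^{s}$, so that $P_{k}^{(k+p+1)}(1-s,0\mid A,B,1)=D_{t=0}^{k}\left[\left(\tfrac{\overline{A}(t)}{t}\right)^{p}(\overline{A}'(t))^{s}\right]$. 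Since $\overline{A}(t)=\sum_{j\geq1}\overline{a}_{j}\tfrac{t^{j}}{j!}$ and $\overline{A}'(t)=\sum_{j\geq0}\overline{a}_{j+1}\tfrac{t^{j}}{j!}$ are precisely the two series attached to the single sequence $\overline{\mathbf{a}}$, the defining generating function of the partial $s$-Bell polynomials gives $\tfrac{1}{p!}\overline{A}(t)^{p}(\overline{A}'(t))^{s}=\sum_{N\geq p}B_{N+s,p+s}^{(s)}(\overline{\mathbf{a}})\tfrac{t^{N}}{N!}$, whence a routine coefficient extraction yields $P_{k}^{(k+p+1)}(1-s,0\mid A,B,1)=\tfrac{1}{p!}\binom{k+p}{p}^{-1}B_{k+p+s,p+s}^{(s)}(\overline{\mathbf{a}})$, where $B^{(s)}(\overline{\mathbf{a}})=B^{(s)}(\overline{\mathbf{a}};\overline{\mathbf{a}})$ in the notation of Corollary \ref{C1}. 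Substituting this back into the expansion above gives the claimed identity.

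I expect the only delicate step to be the change of variable. One must justify that the coefficient operator $D_{t=0}^{k}$ commutes with $t\mapsto\overline{A}(t)$ (a Lagrange inversion argument), and that the reciprocity $A'(\overline{A}(t))\,\overline{A}'(t)=1$ converts $(A'(t))^{1-s}$ precisely into $(\overline{A}'(t))^{s}$ once the factor $\tfrac{t}{A(t)}$ has been inverted. Everything else is a reindexing of the generating function of $B_{n+r,k+r}^{(r)}$ identical to the preceding proposition.
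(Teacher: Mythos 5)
Your proposal follows the paper's own proof step for step: identify the left-hand side as $P_{n}^{\left( p\right) }\left( -s,r\mid A,B,1\right) $, expand by (\ref{e1}) to get $\underset{k=0}{\overset{n}{\sum }}B_{n+r,k+r}^{\left( r\right) }\left( \mathbf{a;b}\right) P_{k}^{\left( k+p+1\right) }\left( 1-s,0\mid A,B,1\right) $, and evaluate the inner polynomial by the Lagrange-type substitution. That substitution is exactly the identity $P_{k}^{\left( k+1+\alpha \right) }\left( x,y\mid A,B,H\right) =P_{k}^{\left( -\alpha \right) }\left( 1-x,y\mid \overline{A},B\circ \overline{A},H\circ \overline{A}\right) $ already derived from (\ref{s}) in the proof of Theorem \ref{T2}, so the step you flag as delicate needs no new argument (and your citation of (\ref{e1}) is in fact more apt than the paper's reference to Theorem \ref{T1} at this point).

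However, you reproduce the paper's one arithmetic slip, and it is worth seeing that it is a slip. From the generating function you correctly write,
\begin{equation*}
\frac{1}{p!}\left( \overline{A}\left( t\right) \right) ^{p}\left( \overline{A}^{\prime }\left( t\right) \right) ^{s}=\underset{N\geq p}{\sum }B_{N+s,p+s}^{\left( s\right) }\left( \overline{\mathbf{a}}\right) \frac{t^{N}}{N!},
\end{equation*}
coefficient extraction gives
\begin{equation*}
D_{t=0}^{k}\left[ \left( \frac{\overline{A}\left( t\right) }{t}\right) ^{p}\left( \overline{A}^{\prime }\left( t\right) \right) ^{s}\right] =k!\cdot p!\cdot \frac{B_{k+p+s,p+s}^{\left( s\right) }\left( \overline{\mathbf{a}}\right) }{\left( k+p\right) !}=\binom{k+p}{p}^{-1}B_{k+p+s,p+s}^{\left( s\right) }\left( \overline{\mathbf{a}}\right) ,
\end{equation*}
with no factor $\frac{1}{p!}$; the extra $\frac{1}{p!}$ that you (like the paper) attach to this value does not follow from the generating function you wrote. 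Consequently the $\frac{1}{p!}$ in the proposition itself is spurious. A quick check: for $A\left( t\right) =t$ and $B\left( t\right) =\exp \left( t\right) $ one finds $B_{n+r,k+r}^{\left( r\right) }\left( \mathbf{a;b}\right) =\binom{n}{k}r^{n-k}$ and $B_{k+p+s,p+s}^{\left( s\right) }\left( \overline{\mathbf{a}}\right) =\delta _{k,0}$, so the left-hand side of the proposition equals $r^{n}$ while the stated right-hand side equals $r^{n}/p!$. In short, your write-up is a faithful reconstruction of the paper's argument, but a fully correct version would carry out the final extraction as above and drop the $\frac{1}{p!}$ both in the proof and in the statement.
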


\begin{proof}
By definition and by Theorem \ref{T1}, we have%
\begin{eqnarray*}
D_{t=0}^{n}\left[ \left( \frac{t}{A\left( t\right) }\right) ^{p}\left(
B\left( t\right) \right) ^{r}\left( A^{\prime }\left( t\right) \right) ^{-s}%
\right] &=&P_{n}^{\left( p\right) }\left( -s,r\mid A,B,1\right) \\
&=&\underset{k=0}{\overset{n}{\sum }}B_{n+r,k+r}^{\left( r\right) }\left( 
\mathbf{a;b}\right) P_{k}^{\left( k+p+1\right) }\left( 1-s,0\mid
A,B,1\right) ,
\end{eqnarray*}%
and since%
\begin{eqnarray*}
P_{k}^{\left( k+p+1\right) }\left( 1-s,0\mid A,B,1\right)
&=&D_{t=0}^{k}\left( \frac{t}{A\left( t\right) }\right) ^{k+p+1}\left(
A^{\prime }\left( t\right) \right) ^{1-s} \\
&=&D_{t=0}^{k}\left( \frac{\overline{A}\left( t\right) }{t}\right)
^{p}\left( A^{\prime }\circ \overline{A}\left( t\right) \right) ^{-s} \\
&=&D_{t=0}^{k}\left( \frac{\overline{A}\left( t\right) }{t}\right)
^{p}\left( \overline{A}^{\prime }\left( t\right) \right) ^{s} \\
&=&\frac{1}{p!}\binom{k+p}{p}^{-1}B_{k+p+s,p+s}^{\left( s\right) }\left( 
\overline{\mathbf{a}}\right) ,
\end{eqnarray*}%
it follows $P_{n}^{\left( p\right) }\left( -s,r\mid A,B,1\right) =\frac{1}{p!%
}\underset{k=0}{\overset{n}{\sum }}\binom{k+p}{p}^{-1}B_{n+r,k+r}^{\left(
r\right) }\left( \mathbf{a;b}\right) B_{k+p+s,p+s}^{\left( s\right) }\left( 
\overline{\mathbf{a}}\right) .$
\end{proof}


\begin{thebibliography}{99}
\bibitem{bell} E. T. Bell, Exponential polynomials. \textit{Ann. Math.}, 35
(1934) 258--277.

\bibitem{bel} H. Belbachir and I. E. Bousbaa, Translated Whitney and $r$%
-Whitney numbers: a combinatorial approach. \textit{J. Integer Seq.} 16
(2013) Art. 13.8.6.

\bibitem{cho} W. S. Chou, L. C. Hsu and P. J. S. Shiue, Application of Fa%
\`{a} di Bruno's formula in characterization of inverse relations. \textit{%
J. Comput. Appl. Math., }190 (2006) 151--169.

\bibitem{chou} A. Chouria and J. -G. Luque, $r$-Bell polynomials in
combinatorial Hopf algebras. \textit{C. R. Acad. Sci. Paris, Ser. I,} 
\textbf{355} (2017), 243--247.

\bibitem{gou} H. W. Gould, Higher order extensions of Melzak's formula. 
\textit{Util. Math.,} 72\textbf{\ }(2007) 23--32.

\bibitem{luk} Y. L. Luke, The Special Functions and Their Approximations,
vol. I. Academic Press, New York, London, 1969.

\bibitem{man} M. M. Mangontarum and J. Katriel, On $q$-boson operators and $%
q $-analogues of the $r$-Whitney and $r$-Dowling numbers. \textit{J. Integer
Seq.}, 18 (2015) Art. 15.9.8.

\bibitem{mer} M. Merca, A note on the $r$-Whitney numbers of Dowling
lattices. \textit{C. R. Math. Acad. Sci. Paris.} 351 (17--18) (2013)
649--655.

\bibitem{mez} I. Mez\H{o}, A new formula for the Bernoulli polynomials. 
\textit{Results. Math.} 58 (2010), 329--335.

\bibitem{mih} M. Mihoubi, Bell polynomials and binomial type sequences. 
\textit{Discrete Math.}, 308 (2008), 2450-2459.

\bibitem{mih2} M. Mihoubi, Bell polynomials and inverse relations. \textit{%
J. Integer Seq.,} 13 (2010), Art. 10.4.5.

\bibitem{mih4} M. Mihoubi and M. Rahmani, The partial $r$-Bell polynomials. 
\textit{Afr. Mat.}, 28 (Issue 7-8) (2017) 1167--1183.

\bibitem{mih5} M. Mihoubi and M. Sahari, On some polynomials applied to the
theory of hyperbolic differential equations. Submitted.

\bibitem{mih3} M. Mihoubi and M. Saidi, An identity on pairs of Appell-type
polynomials. \textit{C. R. Acad. Sci. Paris, Ser.I}, 353 (2015) 773--778.

\bibitem{mih1} M. Mihoubi and M. Tiachachat, Some applications of the $r$%
-Whitney numbers. \textit{C. R. Acad. Sci. Paris, Ser.I}, 352 (2014)
965--969.

\bibitem{pra} T. R.\ Prabhakar and S. Gupta, Bernoulli polynomials of the
second kind and general order. \textit{Indian J. pure appl. Math.}, 11
(1980) 1361-1368.

\bibitem{rah} M. Rahmani, Some results on Whitney numbers of Dowling
lattices. \textit{Arab J. Math. Sci.}, 20 (1) (2014) 11--27.

\bibitem{rom} S. Roman, The Umbral Calculus, Academic Press, INC, 1984.

\bibitem{sha} M. Shattuck, Some combinatorial formulas for the partial $r$%
-Bell polynomials. \textit{Notes Number Th. Discr. Math.}, 23 (1) (2017)
63--76.

\bibitem{sri2} H. M. Srivastava, An explicit formula for the generalized
Bernoulli polynomials. \textit{J. Math. Anal. Appl.}, 130 (1988) 509--513.
\end{thebibliography}
\end{document}